\title[Families of Explicit Isogenies of Hyperelliptic Jacobians]{
	Families of Explicit Isogenies \\ of Hyperelliptic Jacobians
}
\author{Benjamin Smith}
\address{
	INRIA Saclay--\^Ile-de-France 
	/ 
	Laboratoire d'Informatique de l'\'Ecole polytechnique (LIX),
	91128 Palaiseau Cedex, France
}
\email{smith@lix.polytechnique.fr}
\theoremstyle{definition}
\newtheorem{definition}{Definition}[section]
\newtheorem{example}{Example}[section]
\theoremstyle{remark}
\newtheorem{remark}{Remark}[section]
\theoremstyle{plain}
\newtheorem{theorem}{Theorem}[section]
\newtheorem{lemma}[theorem]{Lemma}
\newcommand{\CC}{{\mathbb{C}}}
\newcommand{\FF}{{\mathbb{F}}}
\newcommand{\QQ}{{\mathbb{Q}}}
\newcommand{\QQbar}{{\overline{\mathbb{Q}}}}
\newcommand{\ZZ}{{\mathbb{Z}}}
\newcommand{\product}[3][T]{\ensuremath{{#2}\!\times_{#1}\!{#3}}}
\newcommand{\XxY}{\product[]{X}{Y}}
\newcommand{\Jac}[1]{\ensuremath{{J}_{#1}}}
\newcommand{\Jacdual}[1]{\ensuremath{\widehat{J}_{#1}}}
\newcommand{\family}[1]{\ensuremath{\mathcal{#1}}}
\newcommand{\Jacfamily}[1]{\ensuremath{{\family{J}}_{\family{#1}}}}
\newcommand{\Gal}[1]{{\mathrm{Gal}(#1)}}
\newcommand{\Hom}{{\mathrm{Hom}}}
\newcommand{\End}{{\mathrm{End}}}
\newcommand{\Mat}{{\mathrm{Mat}}}
\newcommand{\variety}[1]{\ensuremath{V\!\left({#1}\right)}}
\newcommand{\dualof}[1]{\ensuremath{{#1}^\dagger}}
\newcommand{\compose}[2]{\ensuremath{{#1}\circ{#2}}}
\newcommand{\multiplication}[2][{}]{\ensuremath{[#2]_{#1}}}
\newcommand{\genus}[1]{\ensuremath{{g_{#1}}}}
\newcommand{\differentials}{\ensuremath{\Omega}}
\newcommand{\closure}[1]{\ensuremath{\overline{#1}}}
\newcommand{\Spec}{\mathrm{Spec}}
\newcommand{\hyperellipticmoduli}[1]{\ensuremath{\family{H}_{#1}}}
\newcommand{\abelianmoduli}[1]{\ensuremath{\family{A}_{#1}}}
\newcommand{\isogenytype}[2]{\ensuremath{(\ZZ/{#1}\ZZ)^{#2}}}
\newcommand{\isogenytypetwo}[4]{\ensuremath{(\ZZ/{#1}\ZZ)^{#2}\!\times\!(\ZZ/{#3}\ZZ)^{#4}}}
\newcommand{\isogenytypethree}[6]{\ensuremath{(\ZZ/{#1}\ZZ)^{#2}\!\times\!(\ZZ/{#3}\ZZ)^{#4}\!\times\!(\ZZ/{#3}\ZZ)^{#4}}}
\newcommand{\differentialmatrix}[2][{}]{\ensuremath{D_{#1}({#2})}}
\newcommand{\torsionmatrix}[1]{\ensuremath{T_2({#1})}}
\newcommand{\Tr}{\ensuremath{\mathrm{Tr}}}
\newcommand{\subgrp}[1]{\ensuremath{\left\langle{#1}\right\rangle}}
\begin{document}

\begin{abstract}
	We construct
	three-dimensional families of hyperelliptic curves
	of genus $6$, $12$, and $14$,
	two-dimensional families of hyperelliptic curves
	of genus $3$, $6$, $7$, $10$, $20$, and $30$,
	and one-dimensional families of hyperelliptic curves
	of genus $5$, $10$ and $15$,
	all of which are equipped with an an explicit 
	isogeny from their Jacobian
	to another hyperelliptic Jacobian.
	We show that the Jacobians are generically absolutely simple,
	and describe the kernels of the isogenies.
	The families are derived from Cassou--Nogu\`es and Couveignes'
	explicit classification
	of pairs $(f,g)$ of polynomials such that $f(x_1) - g(x_2)$
	is reducible.
\end{abstract}

\maketitle

\section{
	Introduction
}
\label{sec:introduction}

In this article,
we construct twelve explicit families of isogenies of hyperelliptic Jacobians.
By explicit,
we mean that we provide equations for hyperelliptic curves 
generating the domains and codomains of
each isogeny,
together with a correspondence on the curves 
realizing the isogeny as
a map on divisor classes.
Our main results are summarized by Theorem~\ref{theorem:main},
which follows from 
the examples of \S\ref{sec:examples}.
We also construct some families of 
Jacobians with explicit Real Multiplication,
including and generalizing the families described by
Tautz, Top, and Verberkmoes~\cite{Tautz-Top-Verberkmoes}.

\begin{theorem}
\label{theorem:main}
	For each row of the following table,
	there exists 
	an $n$-dimensional family of explicit isogenies of Jacobians
	of hyperelliptic curves of genus $g$ over $k$,
	with kernel isomorphic to $G$ 
	(and hence splitting multiplication-by-$m$).
	The generic fibre of each family
	is an isogeny of absolutely simple Jacobians.
	\begin{center}
	\begin{tabular}{|r|c|c|l|l|}
		\hline
		$g$	& $n$	& $\multiplication{m}$	& $G$ & $k$ \\
		\hline
		\hline
		$3$	& $2$	& $\multiplication{2}$	& $\isogenytype{2}{3}$ & $\QQ(\sqrt{-7})$ \\
		\hline
		$5$	& $1$	& $\multiplication{3}$	& $\isogenytype{3}{5}$ & $\QQ(\sqrt{-11})$ \\
		\hline
		$6$	& $3$	& $\multiplication{2}$	& $\isogenytype{2}{6}$ & $\QQ(\sqrt{-7})$ \\
		\hline
		$6$	& $2$	& $\multiplication{3}$	& $\isogenytype{3}{6}$ & $\QQ(\sqrt{-3\sqrt{13}+1})$ \\
		\hline
		$7$	& $2$	& $\multiplication{4}$	& $\isogenytypetwo{4}{4}{2}{6}$ & $\QQ(\sqrt{-15})$ \\
		\hline
		$10$	& $2$	& $\multiplication{3}$	& $\isogenytype{3}{10}$ & $\QQ(\sqrt{-11})$ \\
		\hline
		$10$	& $1$	& $\multiplication{4}$	& $\isogenytypetwo{4}{9}{2}{2}$ & $\QQ(\sqrt{-7})$ \\
		\hline
		$12$	& $3$	& $\multiplication{3}$	& $\isogenytype{3}{12}$ & $\QQ(\sqrt{-3\sqrt{13}+1})$ \\
		\hline
		$14$	& $3$	& $\multiplication{4}$	& $\isogenytypetwo{4}{9}{2}{10}$ & $\QQ(\sqrt{-15})$ \\
		\hline
		$15$	& $1$	& $\multiplication{8}$	& $\isogenytypethree{8}{5}{4}{10}{2}{10}$ & Sextic CM-field (see Ex.~\ref{example:degree-31})  \\
		\hline
		$20$	& $2$	& $\multiplication{4}$	& $\isogenytypetwo{4}{19}{2}{2}$ & $\QQ(\sqrt{-7})$	 \\
		\hline
		$30$	& $2$	& $\multiplication{8}$	& $\isogenytypethree{8}{11}{4}{19}{2}{19}$ & Sextic CM-field (see Ex.~\ref{example:degree-31}) \\
		\hline
	\end{tabular}
	\end{center}
\end{theorem}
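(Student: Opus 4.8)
The plan is to prove the twelve rows one at a time: Theorem~\ref{theorem:main} is the summary of twelve families constructed explicitly in \S\ref{sec:examples}, so its proof reduces to checking, for each family, the genus $g$, the dimension $n$ of the base, the field $k$, the group structure of the kernel $G$ (whence that it splits $\multiplication{m}$), and the absolute simplicity of the generic fibre. All twelve are built by one recipe, and I would organise the argument around it.

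\textbf{The uniform construction.}
Fix a row. From Cassou--Nogu\`es and Couveignes' classification one takes a pair $(f,g)$ of polynomials of a fixed degree $d$ over $k$ (the relevant degrees being $7,11,13,15,21,31$, each used twice, which simultaneously control the genera and the fields $k$ in the table), chosen so that $f(x_1)-g(x_2)$ has a proper factor $\phi(x_1,x_2)$. Up to the affine substitutions $x\mapsto ax+b$ acting on source and target, such a pair carries exactly $n$ essential parameters --- this is the origin of the dimension $n$, with at most one extra parameter for the ``doubled'' rows coming from the choice of $\theta$ below. One then sets $\family{C}\colon v^2=\theta(f(x))$ and $\family{C}'\colon w^2=\theta(g(y))$ for a suitably chosen polynomial $\theta$ (linear for six of the rows, quadratic for the other six, which is what makes each degree $d$ occur twice), so that $\family{C}$ and $\family{C}'$ are hyperelliptic of genus $g$. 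Since $f(x)=g(y)$ on the locus $\phi(x,y)=0$, one has $\theta(f(x))=\theta(g(y))$ there, so the curve cut out by $\phi=0$ inside $\family{C}\times\family{C}'$ has a component $\family{X}$ on which $v=w$; write $\pi\colon\family{X}\to\family{C}$ and $\pi'\colon\family{X}\to\family{C}'$ for the two projections. The family of isogenies is
$$
\varphi\;:=\;\pi'_*\circ\pi^{*}\;:\;\Jac{\family{C}}\longrightarrow\Jac{\family{C}'},
$$
and the explicit correspondence promised in the introduction is $(\pi,\pi')$ together with equations for $\family{X}$, $\family{C}$ and $\family{C}'$.

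\textbf{Kernel and multiplier.}
The transpose correspondence $(\pi',\pi)$ gives $\widehat\varphi:=\pi_*\circ(\pi')^{*}\colon\Jac{\family{C}'}\to\Jac{\family{C}}$, and one computes $\widehat\varphi\circ\varphi$ and $\varphi\circ\widehat\varphi$ directly from the fibre products $\family{X}\times_{\family{C}}\family{X}$ and $\family{X}\times_{\family{C}'}\family{X}$: each decomposes into a diagonal part, which contributes $\multiplication{m}$, plus off-diagonal correspondences that factor through the rational hyperelliptic quotients and so act as $0$ on the Jacobians. Hence $\widehat\varphi\circ\varphi=\multiplication{m}$, so $\ker\varphi\subseteq\Jac{\family{C}}[m]$ --- that is, $G$ splits $\multiplication{m}$ --- and comparing degrees forces $\#\ker\varphi=m^{g}$. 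The precise group structure of $G$ then follows from the ramification of $\pi$ and $\pi'$, equivalently the branch loci of $f$, $g$ and $\theta$: these determine the induced map on $m$-torsion and hence the form of $G$ recorded in the table, with $|G|=m^{g}$ as a running consistency check.

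\textbf{Absolute simplicity of the generic fibre.}
This I expect to be the main obstacle. For one fibre it is enough to specialise the parameters to a carefully chosen point over a number field, reduce modulo a prime of good reduction, and compute the Weil polynomial of Frobenius on $\Jac{\family{C}}$, checking that it is irreducible and that the associated CM field admits no proper sub-CM-structure, so that the reduced Jacobian is absolutely simple. At genera as large as $30$ the point count is only feasible because $\family{C}$ covers curves of much smaller genus (it also carries the Real Multiplication mentioned in the introduction), so its zeta function factors and the computation can be performed on the small quotients. One then transfers simplicity to the generic fibre: the specialisation map on geometric endomorphism algebras is an injective ring homomorphism, so a splitting of the generic fibre would yield a nontrivial idempotent mapping into $\End^{0}$ of the absolutely simple reduced fibre --- impossible, as the latter is a division algebra. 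Thus the generic fibre is absolutely simple, and since isogenies preserve simplicity so is its image under $\varphi$.
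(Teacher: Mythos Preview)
Your overall architecture matches the paper's: twelve rows, each handled by the linear or quadratic construction on one of the six Cassou--Nogu\`es--Couveignes pairs, with the correspondence $V(y_1-y_2,A(x_1,x_2))$ inducing $\phi$. The dimension count via moduli is also in the right spirit (the paper does this in Lemma~\ref{lemma:family-dimension}). But two of your three technical steps have real problems.

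\textbf{The multiplier.} Your plan to identify $\widehat\varphi\circ\varphi$ by decomposing the fibre product $\family{X}\times_{\family{C}'}\family{X}$ and asserting that the off-diagonal pieces ``factor through the rational hyperelliptic quotients and so act as $0$'' is not justified, and I do not see why it should be true in general: the off-diagonal components of the self-fibre-product of $V(A(x_1,x_2))$ over $x_2$ are governed by the other irreducible factors of the resultant $\mathrm{Res}_{x_2}(A(x_1,x_2),A(x_3,x_2))$, and there is no a priori reason these induce zero on the Jacobian. The paper avoids this entirely by working on differentials: it computes the lower-triangular matrix $\differentialmatrix[X,Y]{\phi}$ explicitly from the Newton--Girard identities applied to $A$, observes $\differentialmatrix[Y,X]{\dualof{\phi}}=\differentialmatrix[X,Y]{\phi}^\sigma$, and checks the product is $mI_g$. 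This is both concrete and row-by-row computable. For the kernel structure when $m=4$ or $8$, the paper does not use ramification data abstractly; it computes the restriction $\phi|_{\Jac{X}[2]}$ as an explicit $\FF_2$-matrix on Weierstrass-point differences (Lemmas~\ref{lemma:kernel-from-two-rank} and~\ref{lemma:two-rank}) at a well-chosen specialisation, and reads off the $2$-rank.

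\textbf{Absolute simplicity.} Your feasibility argument for the large-genus point counts is self-defeating: you propose to exploit that $\family{C}$ covers curves of smaller genus so that ``its zeta function factors'', but any such cover of a positive-genus curve would make $\Jac{\family{C}}$ reducible, contradicting the very conclusion you are after. (The Real Multiplication families in the introduction are the Dickson/cyclic examples of \S\ref{section:CM-and-RM}, not the twelve families of Theorem~\ref{theorem:main}.) The paper instead runs Kedlaya's $p$-adic cohomology algorithm directly on a single specialisation of $\family{C}$ over a small finite field --- feasible even at $g=30$ over $\FF_{3^6}$ --- obtains an irreducible Weil polynomial, and applies the Howe--Zhu criterion (Lemma~\ref{lemma:simplicity-criterion}). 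Your transfer of simplicity from the special fibre to the generic one via specialisation of endomorphism algebras is correct in spirit; the paper cites Chai--Oort for this step.
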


Our families of isogenies are derived from
the remarkable explicit classification
of pairs of polynomials $(f,g)$ such that $f(x_1) - g(x_2)$ is reducible
due to 
Cassou--Nog\`ues and Couveignes~\cite{CNC},
building upon the work of 
Fried~\cite{Fried-1,Fried-2,Fried-3}, 
Feit~\cite{Feit-1,Feit-2,Feit-3},
and others~\cite{Cassels}.
We associate a family of pairs of curves 
to every such pair $(f,g)$,
and a family of explicit homomorphisms 
(between the Jacobians of the curves of each pair)
to each factor of $f(x_1) - g(x_2)$.
We show that each homomorphism is in fact an isogeny
of (generically) absolutely simple Jacobians,
and compute the isomorphism type of the kernels.
We also calculate the dimension of the image of each family
in its appropriate moduli space.

Over the complex field,
abelian varieties 
are complex tori,
and we may construct isogenies by
working with period matrices. 
Over arbitrary fields,
these methods are not available to us;
the only abelian varieties for which we have a convenient representation
for explicit computation
are Jacobians of curves,
where we can use the standard isomorphism with the divisor class group.
However,
the Jacobians occupy a positive-codimension subspace of the moduli space
of abelian varieties
in dimension greater than three,
so an isogeny with a Jacobian for a domain generally does not have another
Jacobian for a codomain.
For this reason,
examples of explicit isogenies of higher-dimensional abelian varieties
are particularly rare
(setting aside endomorphisms such as integer multiplication and Frobenius).
We note that recently, 
Mestre has described a $(g+1)$-dimensional family
of $\isogenytype{2}{g}$-isogenies of Jacobians 
of hyperelliptic curves of genus~$g$
for every $g \ge 1$ (see~\cite{Mestre}).
Our families of isogenies 
are defined over number fields,
and provide a source of examples
of explicit isogenies of high-dimensional abelian varieties
over exact fields.

This work generalizes some results from 
the author's unpublished thesis~\cite[\S6]{Smith-thesis}.
The subfamily at $s =0$ of the isogeny in Example~\ref{example:linear-7}
and the fibre at $s = 0$ of the isogeny in Example~\ref{example:linear-11}
also appeared earlier in the thesis of Kux~\cite[\S4.1]{Kux-thesis}.
The Real Multiplication families in Examples~\ref{example:reducible-cyclotomic}
and~\ref{example:TTV-curves}
appeared in the work of 
Tautz, Top, and Verberkmoes~\cite{Tautz-Top-Verberkmoes}.
We assume some familiarity
with the basic theory of curves and abelian varieties,
referring the reader to
Birkenhake and Lange~\cite{Birkenhake-Lange}, 
Hindry and Silverman~\cite[Part~A]{Hindry-Silverman},
Milne~\cite{Milne},
and Shimura~\cite{Shimura}
for further details.

\subsection*{Notation}
Throughout this article,
$\zeta_n$ denotes 
a primitive $n^\mathrm{th}$ root of unity in $\closure{\QQ}$.
If $\sigma$ is an automorphism of a field $k$
and $f(x) = \sum_{i} c_i x^i$ is a polynomial over $k$,
then we write $f^\sigma(x)$
for the polynomial $\sum_{i} c_i^\sigma x^i$.
If $\phi$ is an isogeny of abelian varieties
with kernel isomorphic to a group~$G$,
then we say that $\phi$ is a $G$-isogeny.

\subsection*{Acknowledgements}
The author is grateful to the mathematics departments
of the University of Sydney and Royal Holloway, University of London,
where parts of this work were carried out.
This research was supported in part by EPSRC grant EP/C014839/1.

\section{
	The basic construction
}
\label{section:basic-construction}

Suppose $(f,g)$ is a pair of squarefree polynomials 
of degree at least~$5$ over $k$
such that there exists a nontrivial factorization
\[
	f(x_1) - g(x_2) = A(x_1,x_2)B(x_1,x_2) .
\]
Given such a pair of polynomials,
we define a pair $(X,Y)$ of hyperelliptic curves by
\[
	X : y_1^2 = f(x_1) \quad \text{and}\quad Y : y_2^2 = g(x_2) 
	.
\]
The factors $A$ and $B$ of $f(x_1) - g(x_2)$
define explicit homomorphisms from $\Jac{X}$ to~$\Jac{Y}$
as follows:
Let $C$ be the correspondence on $\XxY$
defined by
\begin{equation}
\label{eq:correspondence}
	C := \variety{y_1 - y_2, A(x_1,x_2)} \subset \XxY .
\end{equation}
The natural projections of $\XxY$
restrict to coverings $\pi_X: C \to X$
and $\pi_Y: C \to Y$.
Composing the pullback $(\pi_X)^*: \Jac{X} \to \Jac{C}$
with the pushforward $(\pi_Y)_*: \Jac{C} \to \Jac{Y}$,
we obtain a homomorphism
\begin{equation}
\label{eq:homomorphism}
	\phi := \compose{(\pi_Y)_*}{(\pi_X)^*} 
	: 
	\Jac{X} \longrightarrow \Jac{Y} 
	;
\end{equation}
we say $\phi$ is \emph{induced} by $C$.
The homomorphism $\phi$ is completely explicit:
we can compute the image of a divisor class on $X$ under $\phi$
by pulling back a representative divisor to $C$ 
and then pushing the result forward onto $Y$.

If we replace $A$ with $B$ in~\eqref{eq:correspondence},
we obtain the homomorphism $-\phi$.
Exchanging $X$ and $Y$ in~\eqref{eq:homomorphism},
we obtain the Rosati dual homomorphism $\dualof{\phi}: \Jac{Y} \to \Jac{X}$
(recall $\dualof{\phi} = \lambda_X^{-1}\hat{\phi}\lambda_Y$,
where $\hat\phi: \Jacdual{Y} \to \Jacdual{X}$ 
is the dual homomorphism
and $\lambda_X: \Jac{X} \stackrel{\sim}{\to} \Jacdual{X}$ 
and $\lambda_Y: \Jac{Y} \stackrel{\sim}{\to} \Jacdual{Y}$
are the canonical principal polarizations).

If $A(x_1,x_2)$ divides $f(x_1) - g(x_2)$,
then
it also divides $F(f(x_1)) - F(g(x_2))$
for every polynomial $F$ over $k$.
Therefore, 
if we let 
$F = x^d + s_1x^{d-1} + \cdots + s_{d-1}x + s_d$
be the generic monic polynomial of degree $d$
(where the $s_i$ are free parameters),
let $\Delta_f$ (resp.~$\Delta_g$) be the discriminant of $F(f(x))$ (resp.~$F(g(x))$),
and let $T$ be the parameter space defined by
\[
	T := 
	\Spec(k[s_1,\ldots,s_d])
	\setminus
	\left(\variety{\Delta_f}\cup\variety{\Delta_g}\right)
	,
\]
then we obtain a $d$-parameter family $(\family{X},\family{Y}) \to T$
of pairs of curves defined by
\[
	\family{X}: 
	y_1^2 
	= F(f(x_1)) 
	= f(x_1)^d + s_1f(x_1)^{d-1} + \cdots + s_{d-1}f(x_1) + s_d 
\]
and
\[
	\family{Y}: 
	y_2^2 
	= 
	F(g(x_2)) 
	= g(x_2)^d + s_1g(x_2)^{d-1} + \cdots + s_{d-1}g(x_2) + s_d 
	,
\]
together with a family of homomorphisms 
$\phi: \Jacfamily{\family{X}} \to \Jacfamily{\family{Y}}$
induced by the correspondence
\[
	C 
	= 
	\variety{y_1 - y_2, A(x_1,x_2)} \subset \product{\family{X}}{\family{Y}}
	.
\]
That is, 
for each $P$ in $T$,
if $C_P$, $X_P$, and $Y_P$
are the fibres of 
$C$, $\family{X}$, and $\family{Y}$ over $P$,
then $C_P$ induces a homomorphism $\phi_P: \Jac{X_P} \to \Jac{Y_P}$.

If $f$ and $g$ are defined over a polynomial ring $k[t]$,
then we can define $(d+1)$-parameter families of pairs of curves and homomorphisms,
this time parameterised by 
$T = \Spec(k[t,s_1,\ldots,s_d])\setminus \left(\variety{\Delta_f}\cup\variety{\Delta_g}\right)$,
in exactly the same way.
Throughout this article we will use $T$
to denote the parameter space of each of our families;
the precise definition of $T$ in each case will be clear
from the context.

We will restrict our attention to the cases $d = \deg F = 1$ 
(the \emph{linear construction})
and $d = \deg F = 2$
(the \emph{quadratic construction}).
For higher degrees $d$,
the Jacobians of $\family{X}$ and $\family{Y}$
are reducible.
Indeed, we have a covering $(x,y)\mapsto(f(x),y)$
from $\family{X}$ to the curve $\family{X}': v^2 = F(u)$,
so $\Jacfamily{\family{X}'}$
is an isogeny factor of $\Jacfamily{\family{X}}$
whenever $\family{X}'$ has positive genus:
that is, whenever $d > 2$.
We aim to construct
explicit isogenies of absolutely simple Jacobians,
so
we will leave aside $d > 2$.

\begin{remark}
	Our constructions depend only on $f$ and $g$,
	and generalize to variable-separated curves
	--- 
	that is, curves of the form $P(y) = f(x)$ 
	where $\deg P > 2$.
	The analysis of the resulting homomorphisms
	is more detailed, however,
	and some of the methods we use in \S\ref{section:kernel-structure}
	do not readily extend to the separated-variable case.
	We will return to these constructions in future work.
\end{remark}

\section{
	Determining kernel structure
}
\label{section:kernel-structure}


Suppose $\family{X}$, $\family{Y}$, and $\phi: \Jacfamily{X} \to \Jacfamily{Y}$
are defined as in the previous section.
We want to determine whether $\phi$ is an isogeny,
and if so to compute a group $G$ isomorphic to its kernel.
It suffices to consider the generic fibre $\phi: \Jac{X} \to \Jac{Y}$,
which is defined over $\QQbar(T)$, 
a field of characteristic zero.

The first step is to show that $\Jac{X}$ is absolutely simple;
then $\phi$ is an isogeny if and only if it is nonzero.
Further, if $\phi$ is an isogeny and $\Jac{X}$ is absolutely simple
and $\genus{X} = \genus{Y}$
then $\Jac{Y}$ must also be absolutely simple,
and $\phi$ itself cannot arise from a product of isogenies
of lower-dimensional abelian varieties.
Since a reducible abelian variety cannot specialize to to
an absolutely simple one,
it is enough to exhibit a point $P$ of the parameter space~$T$
such that the specialization $J_P$ of $\Jacfamily{X}$ at $P$
is absolutely simple.
In 
Examples~\ref{example:linear-7} and~\ref{example:linear-13},
there will exist a convenient choice of $P$
allowing us to deduce the simplicity of $J_P$ from CM-theory.
For the other examples,
we will use the fact that $J_P$ is defined over a number field~$K$,
and exhibit a prime $\mathfrak{p}$ of~$K$
such that the (good) reduction $\overline{J_P}$ of~$J_P$ at~$\mathfrak{p}$
is absolutely simple;
the absolute simplicity of $J_P$,
and thus the absolute simplicity of $\Jac{X}$,
then follows from~\cite[Lemma 6]{Chai--Oort}. 

To show that $\overline{J_P}$ is absolutely simple,
we compute its Weil polynomial $\chi$
(that is, the characteristic polynomial of its Frobenius endomorphism)
using Kedlaya's algorithm~\cite{Kedlaya}, 
which is implemented in Magma~\cite{Harrison, Magma}.
(For this to be practical the norm of $\mathfrak{p}$ must be a power of a small prime,
especially for the higher-genus families.)
If $\chi$ is irreducible,
then $\overline{J_P}$ is simple.
To determine whether $\overline{J_P}$ is \emph{absolutely} simple,
we apply
the criterion appearing in~\cite{Howe-Zhu}:

\begin{lemma}[Howe and Zhu]
\label{lemma:simplicity-criterion}
	Suppose $A$
	is a simple abelian variety over a finite field,
	and let $\chi$
	be its (irreducible) Weil polynomial.
	Let $\pi$ be an element of $\QQbar$
	satisfying $\chi(\pi) = 0$.
	Let $D$ be the set of integers $d > 1$
	such that either
	\begin{enumerate}
		\item	$\chi(x)$ lies in $\ZZ[x^d]$, or
		\item	$[\QQ(\pi):\QQ(\pi^d)] > 1$
			and $\QQ(\pi) = \QQ(\pi^d,\zeta_d)$.
	\end{enumerate}
	If $D$ is empty,
	then $A$ is absolutely simple.
\end{lemma}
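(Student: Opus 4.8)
The plan is to prove the contrapositive: assuming $D=\emptyset$, show that $A\otimes\FF_{q^n}$ is simple for every $n\geq 1$ --- where $q$ is the cardinality and $p$ the characteristic of the base field --- so that $A$ is absolutely simple. Write $K=\QQ(\pi)$ and $K_n=\QQ(\pi^n)$. Since the $q^n$-power Frobenius of $A\otimes\FF_{q^n}$ is $\pi^n$, Honda--Tate theory gives $A\otimes\FF_{q^n}\sim B_n^{r_n}$, where $B_n$ is the simple abelian variety over $\FF_{q^n}$ attached to $\pi^n$; hence $A\otimes\FF_{q^n}$ is simple if and only if $r_n=1$. Writing $e_A^2=[\End^0 A:K]$ and $e_n^2=[\End^0 B_n:K_n]$ and combining $2\dim A=e_A[K:\QQ]$ and $2\dim B_n=e_n[K_n:\QQ]$ with $\dim A=r_n\dim B_n$ yields
\[
	r_n=\frac{e_A}{e_n}\,[K:K_n].
\]

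The first step is to show $r_n=1$ whenever $K_n=K$. Here I would invoke the explicit Honda--Tate--Tate description of $\End^0$ of a simple abelian variety over a finite field: it is the central division algebra over the Frobenius field whose local invariant is $0$ at every finite place not above $p$, is $\tfrac{v(\pi)}{v(q)}[K_v:\QQ_p]\bmod\ZZ$ at each place $v\mid p$, and is $\tfrac12$ at each real place. Passing from $(\pi,q)$ to $(\pi^n,q^n)$ multiplies both $v(\pi)$ and $v(q)$ by $n$ and preserves the real/complex type of every archimedean place, so when $K_n=K$ the algebras $\End^0 A$ and $\End^0 B_n$ have the same invariants everywhere, hence the same Schur index $e_A=e_n$, whence $r_n=[K:K_n]=1$. (The finitely many $\pi$ for which $K$ has a real place --- namely $\pi=\pm\sqrt q$ or $\pi$ with minimal polynomial $x^2-q$ --- I would treat directly.) In particular, if $A$ is not absolutely simple then $K_n\subsetneq K$ for some $n>1$.

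It remains to deduce $D\neq\emptyset$ from the existence of such an $n$. Fix it and let $d$ be the smallest divisor of $n$ with $K_d\subsetneq K$; then $d>1$. Put
\[
	R=\{\,\zeta\in\QQbar : \zeta^d=1\ \text{and}\ \chi(\zeta\pi)=0\,\}.
\]
Because the $\Gal{\QQbar/K_d}$-conjugates of $\pi$ are exactly the elements $\zeta\pi$ with $\zeta\in R$, we get $|R|=[K:K_d]>1$; and minimality of $d$ forces each element of $R\setminus\{1\}$ to be a \emph{primitive} $d$th root of unity. Now split into two cases. If $R$ is all of the group $\mu_d$ of $d$th roots of unity, then --- since $\Gal{\QQbar/\QQ}$ acts transitively on the roots of $\chi$ --- the full root set of $\chi$ is stable under multiplication by $\zeta_d$, and as each $\mu_d$-orbit has exactly $d$ elements this forces $d\mid\deg\chi$ and $\chi(\zeta_d x)=\chi(x)$, i.e.\ $\chi\in\ZZ[x^d]$: condition~(1). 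If instead $R\subsetneq\mu_d$, then for every $\sigma\in\Gal{\QQbar/K_d(\zeta_d)}$ one has $\sigma(\pi)=\eta\pi$ with $\eta\in\mu_d$, and since $\sigma$ fixes $\zeta_d$ iterating shows $\langle\eta\rangle\subseteq R$, so $\eta=1$ (as $R\neq\mu_d$); hence $\pi\in K_d(\zeta_d)$, that is $K\subseteq\QQ(\pi^d,\zeta_d)$. One then argues the reverse inclusion, obtaining $K=\QQ(\pi^d,\zeta_d)$ with $[K:\QQ(\pi^d)]>1$: condition~(2). Either way $d\in D$, contradicting $D=\emptyset$.

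I expect the main obstacle to be the second case just above --- specifically, establishing the reverse inclusion $\QQ(\pi^d,\zeta_d)\subseteq K$, equivalently $\zeta_d\in\QQ(\pi)$, which is where the possible non-normality of $K/\QQ$ has to be controlled (using, presumably, that $d$ was taken minimal). A secondary technical point is pinning down the exact statement of the Honda--Tate local-invariant formula used in the second paragraph; the rest of the argument is formal.
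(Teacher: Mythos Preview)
The paper does not prove this lemma: it simply cites \cite[Proposition~3]{Howe-Zhu} and adds the remark that $D\subset\{d>1:\varphi(d)\mid 2\dim A\}$, so the criterion can be checked effectively. Your sketch is therefore far more than the paper offers---it is essentially a reconstruction of the Howe--Zhu argument itself, and it is correct.

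Two remarks on the points you flag as obstacles. First, the local-invariant comparison in your second paragraph is unnecessary under the standing hypothesis that $\chi$ is irreducible: irreducibility forces $e_A=1$, so your formula becomes $r_n=[K:K_n]/e_n$, and when $K_n=K$ the integrality of $r_n$ alone gives $e_n=r_n=1$ with no appeal to Brauer invariants. Second, the reverse inclusion $\zeta_d\in K$ in case~(b) falls out of what you have already established. The containment $K\subseteq K_d(\zeta_d)$ that you proved places $K$ inside an \emph{abelian} extension of $K_d$, so $K/K_d$ is itself Galois. Now take any $\zeta\in R\setminus\{1\}$: you showed it equals $\sigma(\pi)/\pi$ for some $\sigma\in\Gal{\QQbar/K_d}$, and since $K/K_d$ is Galois we have $\sigma(\pi)\in K$, hence $\zeta=\sigma(\pi)/\pi\in K$. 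But your minimality argument made $\zeta$ a primitive $d$th root of unity, so $\zeta_d\in K$ and $K=\QQ(\pi^d,\zeta_d)$ as required.
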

\begin{proof}
	See~\cite[Proposition 3]{Howe-Zhu}.
	Note that 
	\(D \subset \{ d \in \ZZ_{>0} : \varphi(d) \mid 2\dim A \}\),
	so this criterion can be efficiently checked.
\end{proof}

We will be handling some large Weil polynomials.
To save space, we will use the following, more compact representation.

\begin{definition}
\label{definition:Weil-coefficients}
	Suppose $A$ is a $g$-dimensional abelian variety over $\FF_{q}$
	with Weil polynomial $\chi$.
	We define the \emph{Weil coefficients} of $A$ to be the integers
	$w_1, \ldots, w_g$
	such that
	\[
		\chi(x)
		=
		x^{2g} + w_1 t^{2g-1} + \cdots + w_g x^g
		+ w_{g-1}qx^{g-1} + \cdots + w_1q^{g-1}x + q^{g} .
	\]
\end{definition}

Recall that 
$\compose{\dualof{\phi}}{\phi}$
is an endomorphism of $\Jacfamily{X}$;
if $\phi$ is an isogeny of absolutely simple Jacobians,
then $\compose{\dualof{\phi}}{\phi} = \multiplication[\Jacfamily{X}]{m}$
for some nonzero integer $m$.
Conversely,
if~$\compose{\dualof{\phi}}{\phi} = \multiplication[\Jacfamily{X}]{m}$ for some $m$,
then $\phi$ is an isogeny and $\ker\phi\subset\Jacfamily{X}[m]$.
Since $\phi$ is an isogeny of Jacobians
(thus respecting the canonical polarizations),
its kernel must be 
a maximal isotropic subgroup of $\Jacfamily{X}[m]$
with respect to the $m$-Weil pairing;
the nondegeneracy of the Weil pairing then
gives the following elementary result.

\begin{lemma}
\label{lemma:kernel-structure}
	If $\compose{\dualof{\phi}}{\phi} = \multiplication[\Jacfamily{X}]{m}$
	for some positive integer $m$,
	then 
	$\phi$ is a $G$-isogeny
	for some subgroup $G$ of $(\ZZ/m\ZZ)^{2\genus{X}}$
	such that $G \cong (\ZZ/m\ZZ)^{2\genus{X}}/G$.
	Further, if $m$ is squarefree,
	then $G \cong (\ZZ/m\ZZ)^{\genus{X}}$.
\end{lemma}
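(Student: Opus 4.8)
The plan is to deduce everything from two ingredients: the nondegeneracy of the $m$-Weil pairing $e_m$ on $\Jacfamily{X}[m]$, and the fact that $\ker\phi$ is isotropic for $e_m$. First I would verify that $\compose{\dualof{\phi}}{\phi} = \multiplication[\Jacfamily{X}]{m}$ forces $\phi$ to be an isogeny with $\ker\phi \subseteq \Jacfamily{X}[m]$: indeed $\multiplication[]{m}$ is itself an isogeny, so $\phi$ has finite kernel, hence is an isogeny; and for $D \in \ker\phi$ we get $\multiplication[]{m}(D) = \dualof{\phi}(\phi(D)) = 0$, so $\ker\phi \subseteq \Jacfamily{X}[m]$. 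Since $\Jacfamily{X}$ is principally polarized and $\phi$ arises from a correspondence respecting the canonical polarizations, the identity $\compose{\dualof{\phi}}{\phi} = \multiplication[]{m}$ says that $\phi$ is compatible with the polarizations up to scaling by $m$; the standard consequence is that $\ker\phi$ is isotropic with respect to $e_m$, i.e.\ $e_m$ restricted to $\ker\phi\times\ker\phi$ is trivial. (This is the place where I would cite, e.g., Mumford or Birkenhake--Lange for the relation between polarized isogenies and isotropy under the Weil pairing.)

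Next I would set $G := \ker\phi$, viewed as a subgroup of $(\ZZ/m\ZZ)^{2\genus{X}} \cong \Jacfamily{X}[m]$, and use the nondegenerate alternating pairing $e_m$ on this group to show $G$ is its own annihilator. Writing $G^\perp$ for the annihilator of $G$ under $e_m$, isotropy gives $G \subseteq G^\perp$. Conversely, the degree of $\phi$ equals $\#G$, and dualizing (using $\dualof{\phi}$, whose kernel is the annihilator of $\ker\phi$ under the Weil pairing — again a standard fact, since $\dualof{\phi}$ is, up to the principal polarizations, the dual isogeny $\hat\phi$) gives $\#\ker\dualof{\phi} = \#G^\perp$. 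But $\compose{\dualof{\phi}}{\phi} = \multiplication[]{m}$ has degree $m^{2\genus{X}} = \#\Jacfamily{X}[m]$, so $\#G \cdot \#G^\perp = m^{2\genus{X}}$; combined with nondegeneracy of $e_m$, which gives $\#G \cdot \#G^\perp = m^{2\genus{X}}$ as well, and with $G \subseteq G^\perp$, we conclude $G = G^\perp$. Thus $G$ is a maximal isotropic subgroup. The isomorphism $G \cong (\ZZ/m\ZZ)^{2\genus{X}}/G$ then follows because a nondegenerate alternating pairing on a finite abelian group $H$ identifies $H/G$ with the Pontryagin dual $\widehat{G} := \Hom(G,\ZZ/m\ZZ)$ whenever $G = G^\perp$, and a finite abelian group is (noncanonically) isomorphic to its dual.

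For the final assertion, suppose $m$ is squarefree. Then $\Jacfamily{X}[m] \cong \prod_{\ell \mid m} \Jacfamily{X}[\ell]$, and the pairing $e_m$ decomposes as the orthogonal direct sum of the $e_\ell$ over the primes $\ell \mid m$; a maximal isotropic subgroup of the product is the product of maximal isotropic subgroups of the factors, so it suffices to treat a single prime $\ell$. There $\Jacfamily{X}[\ell]$ is a $2\genus{X}$-dimensional symplectic $\FF_\ell$-vector space, and a maximal isotropic (i.e.\ Lagrangian) subspace has dimension $\genus{X}$, so $G \cap \Jacfamily{X}[\ell] \cong (\ZZ/\ell\ZZ)^{\genus{X}}$; recombining over $\ell \mid m$ gives $G \cong (\ZZ/m\ZZ)^{\genus{X}}$.

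I do not expect a serious obstacle here; the result is genuinely elementary once the polarization-compatibility of $\phi$ and $\dualof{\phi}$ is in hand. The one point requiring care is the precise relationship between $\dualof{\phi}$, the Weil pairing, and isotropy — specifically the claim that $\ker\dualof{\phi}$ is the $e_m$-annihilator of $\ker\phi$ — which I would either cite from the standard references or derive quickly from $\dualof{\phi} = \lambda_X^{-1}\hat\phi\lambda_Y$ together with the compatibility of the Weil pairing with dual isogenies.
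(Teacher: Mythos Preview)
Your approach is the same as the paper's --- the paper simply asserts that the kernel is a maximal isotropic subgroup of $\Jacfamily{X}[m]$ for the $m$-Weil pairing and calls the lemma elementary --- so you are supplying precisely the details the paper omits. The overall outline and the squarefree case are fine.

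There is, however, one genuine slip in the middle paragraph. You assert that $\ker\dualof{\phi}$ is the $e_m$-annihilator $G^\perp$ of $G=\ker\phi$, but $\ker\dualof{\phi}$ lives in $\Jacfamily{Y}$ while $G^\perp$ lives in $\Jacfamily{X}[m]$, so the identification does not typecheck as stated. More importantly, even granting $\#G\cdot\#G^\perp=m^{2\genus{X}}$ together with $G\subseteq G^\perp$, you \emph{cannot} conclude $G=G^\perp$: the trivial subgroup satisfies both conditions too. What is missing is the equality $\#G=m^{\genus{X}}$, and this follows directly from $\deg\dualof{\phi}=\deg\hat\phi=\deg\phi$ (the polarizations $\lambda_X,\lambda_Y$ being principal, hence of degree $1$) combined with $\deg\phi\cdot\deg\dualof{\phi}=\deg\multiplication{m}=m^{2\genus{X}}$. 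Once $\#G=m^{\genus{X}}$, nondegeneracy of $e_m$ gives $\#G^\perp=m^{2\genus{X}}/\#G=m^{\genus{X}}$ as well, and then $G\subseteq G^\perp$ forces $G=G^\perp$. With this correction the rest of your argument goes through unchanged.
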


Let $K = \QQbar(T)$
denote the base field of the generic fibre,
and
let $\differentials(X)$ and $\differentials(Y)$
denote
the $K$-vector spaces of regular differentials on~$X$ and~$Y$,
respectively.
We have the well-known representation
\[
	\differentialmatrix[X,Y]{\cdot}:
	\Hom(\Jac{X},\Jac{Y})
	\longrightarrow
	\Hom(\differentials(X),\differentials(Y))
	,
\]
sending a homomorphism to the induced map on differentials
(see Shimura~\cite[\S2.9]{Shimura} for details).
This representation is faithful in characteristic zero,
and it respects composition:
if $\phi: \Jac{X} \to \Jac{Y}$ 
and $\psi: \Jac{Y} \to \Jac{Z}$
are homomorphisms,
then
\[
	\differentialmatrix[X,Z]{\psi\circ\phi}
	=
	\differentialmatrix[X,Y]{\phi}
	\differentialmatrix[Y,Z]{\psi}
	.
\]
In particular, when $\Jac{X} \cong \Jac{Y}$
we obtain a representation of rings
\[
	\differentialmatrix[X]{\cdot}
	: 
	\End(\Jac{X}) 
	\longrightarrow 
	\End(\differentials(X))
	.
\]
To
determine whether $\phi$ is an isogeny,
we compute
$
	\differentialmatrix[X]{{\dualof{\phi}}{\phi}} 
	= 
	\differentialmatrix[X,Y]{\phi}
	\differentialmatrix[Y,X]{\dualof{\phi}}
$
and check that the result is equal to $mI_\genus{X}$
for some integer $m \not= 0$.
Given $m$,
we can use Lemma~\ref{lemma:kernel-structure}
to partially determine the group structure of $\ker\phi$.

It is straightforward to compute 
$\differentialmatrix[X,Y]{\phi}$
when $\phi$ is induced by a correspondence 
of the form $C = \variety{y_1 -  y_2, A(x_1,x_2)} \subset \XxY$ .
We begin by fixing ordered bases
\[
	\differentials(X)
	=
	\left\langle 
		d(x_1^i)/y_1 : 1 \le i \le \genus{X} 
	\right\rangle
	\text{\quad and\quad }
	\differentials(Y)
	=
	\left\langle
		d(x_2^i)/y_2 : 1 \le i \le \genus{Y}
	\right\rangle
\]
for $\differentials(X)$ and $\differentials(Y)$.
Then $\differentialmatrix[X,Y]{\cdot}$ 
becomes a representation
into $\Mat_{\genus{X}\times\genus{Y}}(K)$
(viewing elements of $\differentials(X)$ and $\differentials(Y)$
as row vectors,
with
matrices representing homomorphisms 
act by multiplication on the right.)
Pulling back our basis 
of~$\differentials(X)$ to~$\differentials(C)$
(via the inclusion $K(X)\hookrightarrow K(C)$ induced by $\pi_X$)
and then taking the trace from $\differentials(C)$ to $\differentials(Y)$
(with respect to the inclusion $K(Y)\hookrightarrow K(C)$ induced by $\pi_Y$),
we have
\[
	\phi_*(d(x_1^i)/y_1)
	=
	\Tr^{\differentials(C)}_{\differentials(Y)}(d(x_1^i)/y_1)
	=
	dt_i/y_2 ,
\]
where $t_i$ is the trace from $K(C)$ to $K(Y)$ of $x_1^i$.
To compute these traces,
we rewrite $A(x_1,x_2)$ as a polynomial in $x_1$ over $K[x_2]$
(after possibly rescaling to ensure $A$ is monic in $x_1$):
\[
	A(x_1,x_2)
	=
	x_1^d + \sum_{i=1}^{d} (-1)^{i}s_i(x_2)x_1^{d-i}
	.
\]
The $s_i$ are the $i^\mathrm{th}$
elementary symmetric polynomials in the roots of $A$
viewed as a polynomial in $x_1$ over $K(x_2)$;
each $s_i$ is a polynomial in $x_2$ over $K$
of degree at most $i$.
The function $t_i$ is by definition
the $i^\mathrm{th}$ power sum symmetric function
in these same roots,
and so we can express the $t_i$
in terms of the $s_j$
using the standard Newton-Girard recurrences:
\[
	k s_k = \sum_{i=1}^k (-1)^{i-1} s_{k-i} t_{i} .
\]
Since each $s_i$ has degree at most $i$,
it follows that each of the trace functions $t_i$
has degree at most $i$.
We can therefore write
\[
	d(t_i)/y_2 = \sum_{j=1}^{i} t_{i,j} d(x_2^j)/y_2
\]
with coefficients $t_{i,j}$ in $K$;
these coefficients 
are precisely the entries of~$\differentialmatrix[X,Y]{\phi}$
(with $t_{i,j} = 0$ for $j > i$).

We noted above that if $\phi: \Jac{X}\to\Jac{Y}$
is a homomorphism induced by a correspondence on $\XxY$,
then we obtain the Rosati dual $\dualof{\phi}: \Jac{Y}\to\Jac{X}$
by simply exchanging~$X$ and $Y$. 
We may therefore compute $\differentialmatrix[Y,X]{\dualof{\phi}}$
in exactly the same way we computed $\differentialmatrix[X,Y]{\phi}$,
expressing the differentials 
$\dualof{\phi}_*(d(x_2^i)/y_2) = \Tr^{\differentials(C)}_{\differentials(X)}(d(x_2^i)/y_2)$
as linear combinations of the $d(x_1^j)/y_1$.

If $\compose{\dualof{\phi}}{\phi} = \multiplication[\Jac{X}]{m}$,
then Lemma~\ref{lemma:kernel-structure}
allows us to determine the structure of $\ker\phi$
when $m$ is squarefree.
But in \S\ref{sec:examples}
we will encounter $m = 2$, $3$, $4$, and $8$;
we will therefore need another technique to handle
$m = 4$ and $m = 8$.

\begin{lemma}
\label{lemma:kernel-from-two-rank}
	Let 
	$\phi:\Jac{X} \to \Jac{Y}$
	be an isogeny
	over a field of characteristic not $2$,
	such that $\dualof{\phi}\phi = \multiplication[X]{m}$
	with $m = 4$ or~$8$,
	and 
	let $\nu$ be the $(\ZZ/2\ZZ)$-rank
	of $\ker\phi\cap\Jac{X}[2]$.
	\begin{enumerate}
		\item	If $m = 4$,
			then $\ker\phi \cong \isogenytypetwo{4}{2\genus{X}-\nu}{2}{2(\nu - \genus{X})}$.
		\item	If $m = 8$,
			then $\ker\phi \cong \isogenytypethree{8}{2\genus{X}-\nu}{4}{\nu-\genus{X}}{2}{\nu - \genus{X}}$.
	\end{enumerate}
\end{lemma}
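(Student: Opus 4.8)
The plan is to analyze the finite group scheme $\ker\phi \subset \Jac{X}[m]$ purely in terms of group theory, using the isotropy condition from Lemma~\ref{lemma:kernel-structure} together with the single invariant $\nu$. Since $\dualof{\phi}\phi = \multiplication[X]{m}$ with $m$ a power of $2$, we know $\ker\phi$ is a subgroup $G$ of $(\ZZ/m\ZZ)^{2\genus{X}}$ that is maximal isotropic for the $m$-Weil pairing; in particular $G \cong (\ZZ/m\ZZ)^{2\genus{X}}/G$, so $\#G = m^{\genus{X}}$. Write $G$ in invariant-factor form as $G \cong \bigoplus_{j} \ZZ/2^{e_j}\ZZ$ with $1 \le e_j \le \log_2 m$. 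The first step is to record two numerical constraints: $\sum_j e_j = \genus{X}\log_2 m$ (from the order), and the number of indices $j$ with $e_j \ge 1$ — that is, the total number of cyclic factors — equals the $(\ZZ/2\ZZ)$-rank of $G[2] = G \cap \Jac{X}[2]$, which is exactly $\nu$ by hypothesis. So $G$ has $\nu$ cyclic factors whose orders multiply to $m^{\genus{X}}$.

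Next I would exploit the self-duality $G \cong (\ZZ/m\ZZ)^{2\genus{X}}/G$ to pin down the exponents $e_j$ beyond just their sum and count. The key observation is that $mG = 0$ forces every $e_j \le \log_2 m$, and applying the isotropy/self-duality relation to the subquotients $G \cap m'\Jac{X}[m]$ for the divisors $m' \mid m$ shows that the multiset of exponents is symmetric about $\tfrac12\log_2 m$ in the sense that the number of factors of a given order is controlled: concretely, for $m = 4$, if $a$ is the number of $\ZZ/4\ZZ$ factors and $b$ the number of $\ZZ/2\ZZ$ factors, then $a + b = \nu$ and $2a + b = 2\genus{X}$, whence $a = 2\genus{X} - \nu$ and $b = 2(\nu - \genus{X})$, giving part~(1). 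For $m = 8$, writing $a, b, c$ for the number of $\ZZ/8\ZZ$, $\ZZ/4\ZZ$, $\ZZ/2\ZZ$ factors, the self-duality of the Weil pairing restricted to $\Jac{X}[2]$ inside $\Jac{X}[8]$ forces $a = c$ (the pairing identifies $G[2]/(G\cap 2\Jac{X}[8])[2]$ with its dual), so $a = c$, $a + b + c = \nu$, and $3a + 2b + c = 3\genus{X}$; solving gives $a = c = \nu - \genus{X}$ and $b = 2\genus{X} - \nu$, which is part~(2).

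The main obstacle is justifying the symmetry constraint $a = c$ in the $m = 8$ case (and more generally that the exponent multiset is ``palindromic''): it does not follow from the order count alone, and requires genuinely using that $G$ is isotropic and \emph{maximal} isotropic for the Weil pairing, not merely that $G \cong (\ZZ/m\ZZ)^{2\genus{X}}/G$ as abstract groups. I would handle this by the standard structure theory of the Weil pairing on $\Jac{X}[m]$: it is a nondegenerate alternating pairing on a free $\ZZ/m\ZZ$-module of rank $2\genus{X}$, and for such a pairing every maximal isotropic subgroup $G$ satisfies $G[2^i]/(2^{j}\Jac{X}[2^{i+j}] \cap G)[2^i]$ duality relations that translate exactly into $\#\{j : e_j \ge r\} + \#\{j : e_j \ge \log_2 m - r + 1\} = 2\genus{X}$ for each $r$. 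Feeding $r = 1, 2, 3$ into this for $m = 8$ yields the linear system above; everything else is the elementary divisor bookkeeping already sketched. One could alternatively cite a Symplectic-lattice lemma, but the hands-on argument above is short enough to include.
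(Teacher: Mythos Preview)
Your argument for $m=4$ is correct and is essentially what the paper has in mind: with only two possible cyclic orders, the $2$-rank $\nu$ and the total order $4^{\genus{X}}$ determine $G$ uniquely.

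For $m=8$ there is a genuine error. The self-duality $G \cong (\ZZ/8\ZZ)^{2\genus{X}}/G$ from Lemma~\ref{lemma:kernel-structure}, read through Smith normal form, says that the multiset of exponents $e_j\in\{0,1,2,3\}$ (taken over all $2\genus{X}$ coordinates, including the zero ones) is invariant under $e\mapsto 3-e$. This pairs exponent $3$ with exponent $0$ and exponent $2$ with exponent $1$; in your notation that is
\[
a \;=\; 2\genus{X}-\nu
\quad\text{and}\quad
b \;=\; c,
\]
not $a=c$. Indeed, your proposed solution $a=c=\nu-\genus{X}$, $b=2\genus{X}-\nu$ fails your own order equation (it gives $3a+2b+c=2\nu$, not $3\genus{X}$) and does not match the statement of the lemma, which has the $\ZZ/8\ZZ$-exponent equal to $2\genus{X}-\nu$. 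With the correct constraints $a=2\genus{X}-\nu$ and $b=c$, the rank equation $a+b+c=\nu$ immediately yields $b=c=\nu-\genus{X}$, which is part~(2).

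A smaller point: you suggest that the abstract isomorphism $G\cong(\ZZ/m\ZZ)^{2\genus{X}}/G$ is not enough and that one must invoke maximal isotropy directly. In fact the abstract isomorphism already forces the palindromic condition on invariant factors, because the isomorphism type of the quotient of a free $\ZZ/m\ZZ$-module by a subgroup is determined by the isomorphism type of the subgroup. So Lemma~\ref{lemma:kernel-structure} is all you need here, in line with the paper's one-line proof.
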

\begin{proof}
	The result follows directly from Lemma~\ref{lemma:kernel-structure}.
\end{proof}

To apply Lemma~\ref{lemma:kernel-from-two-rank},
we need to compute the $(\ZZ/2\ZZ)$-rank $\nu$ of $\ker\phi\cap\Jac{X}[2]$.

\begin{lemma}
\label{lemma:two-rank}
	Let $f(x) = \prod_{i=1}^d(x - \gamma_i)$ 
	and $g(x) = \prod_{i=1}^d(x - \delta_i)$
	be polynomials of degree $d > 2$
	over a field of characteristic not $2$
	such that $f(x_1) - g(x_2)$
	has a nontrivial factor $A(x_1,x_2)$.
	Let
	$X: y_1^2 = f(x_1)$
	and $Y: y_2^2 = g(x_2)$
	be hyperelliptic curves,
	and
	$\phi: \Jac{X} \to \Jac{Y}$
	the homomorphism induced by
	the correspondence
	$\variety{y_1-y_2,A(x_1,x_2)}$ on $\XxY$.
	The $(\ZZ/2\ZZ)$-rank of $\ker\phi\cap\Jac{X}[2]$
	is given by
	\[
		\mathrm{rank}_{(\ZZ/2\ZZ)}( \ker\phi \cap \Jac{X}[2] ) 
		=
		\dim( \ker M ),
	\]
	where $M$ is the $2\genus{X}\!\times\!2\genus{Y}$ matrix over $\FF_{2}$
	with $i,j$-th entry $\nu_{i,j} + \nu_{i,2\genus{Y}+1} \pmod{2}$,
	where $\nu_{i,j}$ denotes the multiplicity of $(x_2 - \delta_j)$
	as factor of $A(\gamma_i,x_2)$
	for $1 \le i , j \le d$.
\end{lemma}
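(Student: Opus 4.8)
The plan is to identify the $2$-torsion of $\Jac{X}$ with an explicit $\FF_2$-vector space of even-weight subsets of the branch points (a Weierstrass-point model), do the same for $\Jac{Y}$, and then describe the restriction of $\phi$ to $2$-torsion as an explicit $\FF_2$-linear map whose matrix is exactly the $M$ in the statement. The rank of $\ker\phi\cap\Jac{X}[2]$ is then the dimension of the kernel of that matrix, which is what we want.

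\medskip

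First I would recall the standard model for $2$-torsion. Since $\deg f = d > 2$ and $f$ is squarefree, the curve $X: y_1^2 = f(x_1)$ has $d$ finite Weierstrass points $W_i = (\gamma_i,0)$ together with the point(s) at infinity; in the divisor class group, the classes of the differences $[W_i - W_j]$ generate $\Jac{X}[2]$, subject to the single relation $\sum_i [W_i - W_\infty] = 0$ when $d$ is odd (and an analogous relation when $d$ is even). Concretely, $\Jac{X}[2]$ is isomorphic to the space of even-cardinality subsets $S$ of the set of branch points modulo complementation, i.e.\ to a quotient of $\FF_2^{d}$; I will fix the coordinates so that the $i$-th basis vector corresponds to the point $(\gamma_i,0)$, giving an $\FF_2$-space of dimension $2\genus{X}$, and similarly for $Y$ with the $\delta_j$. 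A $2$-torsion class of $X$ is thus encoded by its ``branch support'' vector in $\FF_2^{d}$ (reduced mod the relation), and likewise for $Y$.

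\medskip

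Second, the key computation: trace through how $\phi = (\pi_Y)_* (\pi_X)^*$ acts on the class of a single Weierstrass difference $[W_i - W_{i'}]$. Pulling back $(\gamma_i,0)$ along $\pi_X: C \to X$ means intersecting $C = \variety{y_1 - y_2,\ A(x_1,x_2)}$ with the fibre $x_1 = \gamma_i$; on that fibre $y_1 = 0$, so $y_2 = 0$ as well, and the $x_2$-coordinates of the points of $C$ above $W_i$ are exactly the roots of $A(\gamma_i,x_2)$. Now $A(\gamma_i,x_2)$ divides $f(\gamma_i) - g(x_2) = -g(x_2) = -\prod_j (x_2 - \delta_j)$, so every root of $A(\gamma_i,x_2)$ is some $\delta_j$, and the multiplicity with which $(x_2 - \delta_j)$ divides $A(\gamma_i,x_2)$ is the integer $\nu_{i,j}$ of the statement. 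Pushing forward along $\pi_Y$, the image of $(\pi_X)^*[W_i]$ on $Y$ is therefore $\sum_j \nu_{i,j} [(\delta_j,0)]$ (plus a multiple of the point at infinity coming from the behaviour of $C$ over the poles of $x_1$, which is recorded by the extra column index $2\genus{Y}+1$ — this is why the matrix entry is $\nu_{i,j} + \nu_{i,2\genus{Y}+1}$, the second term absorbing the infinity contribution into the reduced model). Reducing the coefficients mod $2$ gives exactly the branch-support vector of $\phi([W_i - \text{base}])$ in $\FF_2^{d}$. Hence $\phi$ on $2$-torsion is the linear map $\FF_2^{d}/(\text{rel}) \to \FF_2^{d}/(\text{rel})$ whose matrix (after choosing bases adapted to the relations, which cuts the ambient $\FF_2^d$ down to dimension $2\genus{X}$ on the source and $2\genus{Y}$ on the target) is $M$. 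Therefore $\ker\phi\cap\Jac{X}[2] = \ker M$, and $\mathrm{rank}_{(\ZZ/2\ZZ)}(\ker\phi\cap\Jac{X}[2]) = \dim\ker M$.

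\medskip

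The main obstacle is bookkeeping at infinity and with the defining relations, not the geometry. The set of branch points of $X$ is $\{\gamma_1,\dots,\gamma_d\}$ together with a point at infinity precisely when $d$ is odd; one must track whether $\pi_X$ and $\pi_Y$ are ramified or split over $x_1 = \infty$ and $x_2 = \infty$, since this determines the infinity-component of the pushed-forward divisor and hence the correction term indexed by $2\genus{Y}+1$ in the matrix entries. I would handle this by working throughout with the reduced $2$-torsion model (even-weight subsets modulo complementation), in which the point at infinity is not an independent generator but is expressed via $\sum_i [(\gamma_i,0)]$; the claimed entry $\nu_{i,j} + \nu_{i,2\genus{Y}+1} \bmod 2$ is exactly the coordinate of the pushforward in that reduced basis, and one checks the two parity cases ($d$ even, $d$ odd) give the same formula because the correction is uniform in $j$. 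Once the model is set up, the only real content is the intersection-of-fibres computation of the previous paragraph, and that is immediate from the definition of $C$ together with $A(\gamma_i,x_2) \mid g(x_2)$.
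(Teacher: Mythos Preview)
Your overall strategy is the same as the paper's: model $\Jac{X}[2]$ and $\Jac{Y}[2]$ by differences of Weierstrass points, compute $(\pi_Y)_*(\pi_X)^*$ on each generator using the observation that the fibre of $C$ over $(\gamma_i,0)$ is supported on the points $(\delta_j,0)$ with multiplicity $\nu_{i,j}$ (because $A(\gamma_i,x_2)\mid g(x_2)$), and read off the matrix of $\phi|_{\Jac{X}[2]}$ in the chosen bases.

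However, your explanation of the correction term $\nu_{i,2\genus{Y}+1}$ is wrong, and if you try to carry out the bookkeeping as you describe it you will not arrive at the stated formula. The index $2\genus{Y}+1$ has nothing to do with infinity or with ``the behaviour of $C$ over the poles of $x_1$'': by the statement's own definition, $\nu_{i,2\genus{Y}+1}$ is the multiplicity of the \emph{finite} factor $(x_2-\delta_{2\genus{Y}+1})$ in $A(\gamma_i,x_2)$. In the paper's setup the base point is $w'_{2\genus{Y}+2}$ (the point at infinity when $d$ is odd, the point $(\delta_d,0)$ when $d$ is even), and the basis for $\Jac{Y}[2]$ is $\{[(w'_j)-(w'_{2\genus{Y}+2})]:1\le j\le 2\genus{Y}\}$. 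The image $\phi([(w_i)-(w_{2\genus{X}+2})])$ naturally has components along all $2\genus{Y}+1$ differences $[(w'_j)-(w'_{2\genus{Y}+2})]$; the term $+\nu_{i,2\genus{Y}+1}$ appears when you use the single relation
\[
[(w'_{2\genus{Y}+1})-(w'_{2\genus{Y}+2})]=\sum_{j=1}^{2\genus{Y}}[(w'_j)-(w'_{2\genus{Y}+2})]
\]
to eliminate the redundant $(2\genus{Y}+1)$-th generator and rewrite in the chosen basis. The genuine base-point/infinity contribution is the term in $\nu_{i,2\genus{Y}+2}$, which after this substitution occurs with coefficient $2$ and hence vanishes modulo $2$. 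So the correction is a basis-change artefact coming from a finite Weierstrass point, not an infinity contribution; your last paragraph also has this backwards (it is $w'_{2\genus{Y}+1}$, not the point at infinity, that is eliminated via the relation). Fixing this bookkeeping, your argument becomes exactly the paper's.
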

\begin{proof}
	For each $1 \le i \le d$,
	we let $w_i$ be the point $(\gamma_i,0)$ on $X$
	and let $w_i'$ be the point $(\delta_i,0)$ on $Y$.
	If $d$ is odd (so $d = 2\genus{X} + 1 = 2\genus{Y} + 1$), 
	then we let $w_{2\genus{X} + 2}$ (resp. $w_{2\genus{Y} + 2}'$)
	be the unique point at infinity on $X$ (resp. $Y$),
	and we set $\nu_{i,2\genus{Y} + 2} := 0$
	for $1 \le i \le 2\genus{X}$.
	The sets $\{ w_i : 1 \le i \le 2\genus{X} + 2\}$
	and $\{ w_i' : 1 \le i \le 2\genus{X} + 2\}$
	are then the sets of Weierstrass points
	of $X$ and $Y$, respectively.
	It is well-known that $\Jac{X}[2]$ (resp.~$\Jac{Y}[2]$)
	is generated by differences of Weierstrass points of $X$
	(resp.~$Y$),
	subject to the relations
	\[
		\begin{array}{r@{\;=\;}l}
		{}[ (w_{2\genus{X} + 1}) - (w_{2\genus{X}+2}) ]
		& 
		\sum_{i=1}^{2\genus{X}}[ (w_i) - (w_{2\genus{X}+2}) ] 
		\quad \text{and}
		\\
		{}[ (w_{2\genus{Y} + 1}') - (w_{2\genus{Y}+2}') ] 
		&
		\sum_{i=1}^{2\genus{Y}}[ (w_i') - (w_{2\genus{Y}+2}') ] 
		.
		\end{array}
	\]
	We therefore fix explicit bases for the $2$-torsion:
	\[
		\Jac{X}[2] 
		=
		\left\langle [ (w_i) - (w_{2\genus{X} + 2}) ]  \right\rangle_{i = 1}^{2\genus{X}}
		\text{\quad and\quad }
		\Jac{Y}[2]
		=
		\left\langle [ (w_i') - (w_{2\genus{Y} + 2}') ] \right\rangle_{i = 1}^{2\genus{Y}}
		.
	\]
	Since ${\phi}$
	restricts to a homomorphism ${\phi}|_2: \Jac{X}[2] \to \Jac{Y}[2]$,
	we have a representation
	\[
		T_2(\cdot) 
		: 
		\Hom(\Jac{X},\Jac{Y}) 
		\longrightarrow 
		\Hom(\Jac{X}[2],\Jac{Y}[2]) 
		\cong
		\Mat_{2\genus{X}\times2\genus{Y}}(\FF_{2})
	\]
	(where the isomorphism is determined by our choice of bases.)
	The $(\ZZ/2\ZZ)$-rank of 
	$\ker{\phi}\cap\Jac{X}[2] = \ker{\phi}|_2$
	is then equal to
	the nullity of the matrix $T_2({\phi})$.
	The entries $t_{i,j}$ 
	of $\torsionmatrix{\phi}$
	are determined by the relations
	\[
		\phi([(w_i) - (w_{2\genus{X} + 2})])
		= 
		\sum_{j=1}^{2\genus{Y}} t_{i,j}[(w_j') - (w_{2\genus{Y} + 2}')]
	\]
	(this is well-defined, since the $t_{i,j}$ are elements of $\ZZ/2\ZZ$).
	Explicitly computing the images of the basis elements, we find
	\[
		\begin{array}{r@{\;}c@{\;}l}
			\phi([(w_i) - (w_{2\genus{X} + 2})])
			& = &
			\sum_{j=1}^{2\genus{Y}+1}(\nu_{i,j} - \nu_{i,{2\genus{Y} + 2}})[(w_i')-(w_{2\genus{Y} + 2}')]
			\\ & = &
			\left( \sum_{j=1}^{2\genus{Y}}(\nu_{i,j} - \nu_{i,{2\genus{Y} + 2}})[(w_i')-(w_{2\genus{Y} + 2}')] \right)
			\\ &   & {}
			+ (\nu_{i,2\genus{Y}+1} - \nu_{i,2\genus{Y} + 2})\left(\sum_{j=1}^{2\genus{Y}}[ (w_i') - (w_{2g+2}') ] \right)
			\\ & = &
			\sum_{j=1}^{2\genus{Y}}(\nu_{i,j} + \nu_{i,2\genus{Y}+1} - 2\nu_{i,{2\genus{Y} + 2}})[(w_i')-(w_{2\genus{Y} + 2}')]
			\\ & = &
			\sum_{j=1}^{2\genus{Y}}(\nu_{i,j} + \nu_{i,2\genus{Y}+1})[(w_i')-(w_{2\genus{Y} + 2}')] ,
		\end{array}
	\]
	so
	\( t_{i,j} \equiv \nu_{i,j} + \nu_{i,2g+1} \pmod 2 \)
	for
	\( 1 \le j \le 2\genus{Y} \)
	and \(1 \le i \le 2\genus{X} \).
	Hence $M = T_2(\phi)$, and the result follows.
\end{proof}

\begin{remark} 
	In practice,
	computing the matrix $M$ of Lemma~\ref{lemma:two-rank}
	can be difficult
	if the roots of $f$ and $g$ 
	are not all defined over a low-degree extension of the ground field.
	In our examples, we will be free to choose (reductions of) $X$ and $Y$ 
	in such a way that all of the roots of $f$ and $g$ lie in
	a small finite field.
\end{remark}

\section{
	Pairs of polynomials 
}
\label{section:polynomial-classification}

In order to use the construction of \S\ref{section:basic-construction}
to produce examples of explicit isogenies,
we need a source of
pairs of polynomials $(f,g)$
such that $f(x_1) - g(x_2)$ is reducible.
We will use the explicit classification of such pairs over $\CC$
due to Cassou--Nogu\`es and Couveignes~\cite{CNC},
which we summarize in Theorem~\ref{th:CNC-theorem}.
This classification is restricted to indecomposable polynomials
(in the sense of Definition~\ref{def:indecomposable}),
and classifies pairs up to an equivalence relation
described in Definition~\ref{def:equivalence}.

\begin{definition}
\label{def:equivalence}
	We say that polynomials $f_1$ and $f_2$ over $k$ are
	\emph{linear translates}
	if there exist $a$ and $b$ in $\closure{k}$,
	with $a \not= 0$,
	such that
	$f_1(x) = f_2(ax + b)$.
	We say pairs $(f_1,g_1)$ and $(f_2,g_2)$ of polynomials are
	\emph{equivalent}
	if there exists $c \not= 0$ and $d$ in $\closure{k}$ 
	such that $f_1$ and $cf_2 + d$ are linear translates
	and $g_1$ and $cg_2 + d$ are linear translates.
\end{definition}

The ``equivalence'' of Definition~\ref{def:equivalence}
is indeed an equivalence relation
on pairs of polynomials.
Further,
if $\mathcal{S}$ is an equivalence class,
then either $f(x_1) - g(x_2)$ is $\closure{k}$-reducible for each $(f,g)$ in $\mathcal{S}$
or $f(x_1) - g(x_2)$ is $\closure{k}$-irreducible for each $(f,g)$ in~$\mathcal{S}$.

\begin{definition}
\label{def:indecomposable}
        A polynomial $f$ is \emph{decomposable}
        if 
        $f(x) = f_1(f_2(x))$
	for some polynomials $f_1$ and $f_2$
        of degree at least $2$,
        and \emph{indecomposable} otherwise.
\end{definition}


\begin{theorem}[Cassou--Nogu\`es and Couveignes~\cite{CNC}]
\label{th:CNC-theorem}
	Let $(f,g)$ be a pair of indecomposable polynomials 
	of degree at least~$3$ over $\CC$,
	and let $\sigma$ denote complex conjugation.
	Assume the classification of finite simple groups.

	If $f$ and $g$ are linear translates,
	then $f(x_1) - g(x_2)$ is divisible by $x_1 - x_2$,
	and $(f(x_1) - g(x_2))/(x_1-x_2)$
	is reducible if and only if 
	$(f,g)$ is equivalent to either
	\begin{enumerate}
		\item	the pair $(x^n,x^n)$
			for some prime $n$, or
		\item	the pair $(D_n(x),D_n(x))$
			for some prime $n$,
			where $D_n(x)$ is 
			defined in Example~\ref{example:Dickson-polynomials}.
	\end{enumerate}
	If $f$ and $g$ are not linear translates,
	then $f(x_1) - g(x_2)$ is reducible
	if and only if 
	$(f,g)$ is equivalent to one of the following
	(possibly after exchanging $f$ and $g$):
	\begin{enumerate}
		\setcounter{enumi}{2}
		\item	a pair in the 
			one-parameter family $(f_{7},f_{7}^\sigma)$
			defined in Example~\ref{example:degree-7},
			or
		\item	the pair $(f_{11},f_{11}^\sigma)$
			defined in Example~\ref{example:degree-11},
			or
		\item	a pair in the
			one-parameter family $(f_{13},f_{13}^\sigma)$
			defined in Example~\ref{example:degree-13},
			or
		\item	a pair in the
			one-parameter family $(f_{15},-f_{15}^\sigma)$
			defined in Example~\ref{example:degree-15},
			or
		\item	the pair $(f_{21},f_{21}^\sigma)$
			defined in Example~\ref{example:degree-21},
			or
		\item	the pair $(f_{31},f_{31}^\sigma)$
			defined in Example~\ref{example:degree-31}.
	\end{enumerate}
\end{theorem}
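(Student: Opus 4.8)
The statement is a deep classification result; in practice one would cite Cassou--Nogu\`es and Couveignes~\cite{CNC} rather than reprove it, but the strategy is as follows. The plan is to reduce reducibility of $f(x_1) - g(x_2)$ to a purely group-theoretic statement about monodromy groups, and then to invoke the classification of primitive genus-zero permutation groups, which itself rests on the classification of finite simple groups (as the hypothesis of the theorem acknowledges).

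First I would regard $f$ and $g$ as branched coverings $f,g\colon\mathbb{P}^1\to\mathbb{P}^1$ of the projective $t$-line, each totally ramified over $t=\infty$, with (geometric) monodromy groups $A\le S_m$ and $B\le S_n$ acting on the $m=\deg f$ and $n=\deg g$ sheets respectively. Indecomposability of $f$ and $g$ translates into primitivity of $A$ and $B$. The irreducible components of (the normalisation of) the fibre product $\{f(x_1)=g(x_2)\}$ correspond to the orbits on $\{1,\dots,m\}\times\{1,\dots,n\}$ of the Galois group $G\le A\times B$ of the compositum of the two covers, so $f(x_1)-g(x_2)$ is reducible exactly when $G$ is intransitive on these $mn$ pairs. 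When $f$ and $g$ are linear translates, the diagonal pairs form a single orbit --- this is the factor $x_1-x_2$ --- and the remaining question is whether $G$ is transitive on the rest.

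Next I would bring in Riemann existence and Riemann--Hurwitz to constrain the possible $A$ (and likewise $B$): $A$ is generated by local monodromy permutations $\sigma_1,\dots,\sigma_r$ with $\sigma_1\cdots\sigma_r=1$, one of them an $m$-cycle coming from $\infty$, subject to the genus-zero relation $\sum_i\bigl(m-\#\{\text{orbits of }\sigma_i\}\bigr)=2(m-1)$. Primitive groups carrying such a generating system form a very short list --- the solution of the \emph{genus-zero problem} --- consisting of the cyclic groups (forcing $f=x^n$), the dihedral groups (forcing the Dickson polynomials $D_n$ of Example~\ref{example:Dickson-polynomials}), and a handful of almost-simple groups acting on $7$, $11$, $13$, $15$, $21$, or $31$ points, exactly the degrees appearing in the table. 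For each admissible pair $(A,B)$ I would check when the fibre product is reducible --- this essentially forces $g$ to be a Galois conjugate of $f$, which is where complex conjugation $\sigma$ enters --- and finally reconstruct the actual polynomials, up to the equivalence of Definition~\ref{def:equivalence}, from the branch-point data, recovering the families of Examples~\ref{example:degree-7}, \ref{example:degree-11}, \ref{example:degree-13}, \ref{example:degree-15}, \ref{example:degree-21}, and~\ref{example:degree-31}.

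The hard part is precisely this group-theoretic core: enumerating the primitive genus-zero permutation groups, and, for each one, determining which ramification types actually occur and which polynomials they produce modulo equivalence. That analysis is the content of~\cite{CNC} (and, before it, of the work of Fried, Feit, Cassels, and others), and I would not attempt to reproduce it here; for the purposes of this article Theorem~\ref{th:CNC-theorem} is taken as input, and each case on the list then feeds the construction of \S\ref{section:basic-construction}.
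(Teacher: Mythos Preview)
Your proposal is correct in substance and in spirit: the paper does not prove Theorem~\ref{th:CNC-theorem} at all, but simply quotes it from~\cite{CNC} as input for the constructions of~\S\ref{section:basic-construction}, exactly as you say in your final paragraph. The sketch you give --- reducibility of $f(x_1)-g(x_2)$ via orbits of the monodromy group on the fibre product, primitivity from indecomposability, the genus-zero constraint from Riemann--Hurwitz, and the ensuing short list of primitive groups containing a full cycle --- is an accurate summary of the strategy actually used in~\cite{CNC} and the earlier work of Fried and Feit it builds on, so there is nothing to correct.
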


\begin{example}[Cyclic polynomials]
\label{example:cyclic-polynomials}
	The difference $x_1^n - x_2^n$
	factors as
	\[
		x_1^n - x_2^n = \prod_{e=0}^{n-1}(x_1 - \zeta_n^ex_2) 
		.
	\]
\end{example}

\begin{example}[Dickson polynomials]
\label{example:Dickson-polynomials}
	For each $n \ge 1$,
	we let $D_n(x) = D_n(x,1)$ denote 
	the $n^\mathrm{th}$ Dickson polynomial 
	of the first kind with parameter $1$:
	that is, 
	the unique polynomial of degree $n$
	such that $D_n(x + x^{-1},1) = x^n + x^{-n}$.
	In characteristic zero
	we have $D_n(x) = 2T_n(x/2)$,
	where $T_n$ is the classical Chebyshev polynomial
	of degree $n$.
	(See~\cite{Lidl-Mullen-Turnwald} for 
	further details.)
	We have a nontrivial factorization
	\[
		D_n(x_1) - D_n(x_2)
		=
		(x_1 - x_2)
		\!\!\!\!\prod_{i=1}^{(n-1)/2}\!\!\!\!
		A_{n,i}(x_1,x_2)
	\]
	(see \cite[Theorem~3.12]{Lidl-Mullen-Turnwald}),
	where
	\[
		A_{n,i}(x_1,x_2) 
		:= 
		x_1^2 + x_2^2 
		- (\zeta_n^i + \zeta_n^{-i})x_1x_2 
		+ (\zeta_n^i - \zeta_n^{-i})
		.
	\]
\end{example}

\begin{example}[Polynomials of degree $7$]
\label{example:degree-7}
	Let 
	$\alpha_7$ be an element of $\QQbar$
	satisfying
	\[
		\alpha_7^2 + \alpha_7 + 2 = 0 ,
	\]
	The involution $\sigma: \alpha_7 \mapsto 2/\alpha_7$ 
	generates $\Gal{\QQ(\alpha_7)/\QQ}$.
	Note that $\QQ(\alpha_7) = \QQ(\sqrt{-7})$ 
	is a quadratic imaginary field,
	and $\sigma$ acts as complex conjugation.
	
	Let $f_{7}$ be the polynomial of degree $7$
	over $\QQ(\alpha_7)[t]$
	defined by
	\[
		\begin{array}{r@{\;}l}
			f_{7}(x) 
			:= 
			&
			\frac{1}{7}x^7 
			- \alpha_7t x^5 
			- \alpha_7t x^4 
			- (2 \alpha_7 + 5)t^2 x^3 
			- (4 \alpha_7 + 6)t^2 x^2 
			\\
			&
			{} + ((3 \alpha_7 - 2)t^3 - (\alpha_7 + 3)t^2) x 
			+ \alpha_7t^3
			.
		\end{array}
	\]
	(Our $f_{7}$ is 
	the polynomial of~\cite[\S5.1]{CNC}
	with $a_2 = \alpha_7$.)
	We have a nontrivial factorization 
	$f_{7}(x_1) - f_{7}^\sigma(x_2) = A_7(x_1,x_2)B_7(x_1,x_2)$,
	where
	$$
		A_{7}
		= 
		x_1^3 - x_2^3 - \alpha_7^\sigma x_1^2x_2 + \alpha_7 x_1x_2^2 
		+ (3-2\alpha_7^\sigma)tx_1 - (3-2\alpha_7)tx_2 + (\alpha_7 - \alpha_7^\sigma)t
		;
	$$
	note that $A_{7}(x_2,x_1) = -A_{7}(x_1,x_2)^\sigma$.
	Both $A_{7}$ and $B_{7}$ 
	are absolutely irreducible.
\end{example}

\begin{example}[Polynomials of degree 11]
\label{example:degree-11}
	Let $\alpha_{11}$
	be an element of $\QQbar$
	satisfying
	\[
		\alpha_{11}^2 + \alpha_{11} + 3 = 0 ;
	\]
	the involution $\sigma: \alpha_{11} \mapsto 3/\alpha_{11}$ 
	generates $\Gal{\QQ(\alpha_{11})/\QQ}$.
	Note that $\QQ(\alpha_{11}) = \QQ(\sqrt{-11})$
	is an imaginary quadratic field,
	and $\sigma$ acts as complex conjugation.
	
	Let
	$f_{11}$ be the polynomial of degree $11$ over $\QQ(\alpha_{11})$
	defined by
	\[
		\begin{array}{r@{\;}l}
		f_{11}(x) := 
		&
		\frac{1}{11}x^{11} + \alpha_{11} x^9 + 2 x^8 - 3(\alpha_{11} + 4) x^7 + 16 \alpha_{11} x^6 
		\\
		&
		{} - 3(7\alpha_{11} - 5) x^5 - 30(\alpha_{11} + 4) x^4 + 63 (\alpha_{11} + 1) x^3 
		\\
		&
		{} - 20(5\alpha_{11} - 1) x^2 - 3(8\alpha_{11} + 47) x + 18 \alpha_{11}
		.
		\end{array}
	\]
	(Our $f_{11}$ is 
	the polynomial of~\cite[\S5.2]{CNC}
	with $a_2 = \alpha_{11}^\sigma$.)
	We have a nontrivial factorization
	$f_{11}(x_1) - f_{11}^\sigma(x_2) = A_{11}(x_1,x_2)B_{11}(x_1,x_2)$,
	where 
	\[
		\begin{array}{r@{\;}l}
		A_{11}(x_1,x_2) 
		=
		&
		x_1^5 - \alpha_{11}x_1^4x_2 - x_1^3x_2^2 + (4 \alpha_{11} + 2)x_1^3 + x_1^2x_2^3 + (\alpha_{11} + 6)x_1^2x_2 
		\\ & {}
		- (2 \alpha_{11} - 10)x_1^2 - (\alpha_{11} + 1)x_1x_2^4 + (\alpha_{11} - 5)x_1x_2^2 
		\\ & {}
		- (12 \alpha_{11} + 6)x_1x_2 + (8 \alpha_{11} - 7)x_1 - x_2^5 + (4 \alpha_{11} + 2)x_2^3 
		\\ & {}
		- (2 \alpha_{11} + 12)x_2^2 + (8 \alpha_{11} + 15)x_2 + 12 \alpha_{11} + 6
		;
		\end{array}
	\]
	note that $A_{11}(x_2,x_1) = -A_{11}^\sigma(x_1,x_2)$.
	Both $A_{11}$ and $B_{11}$ are absolutely irreducible.
\end{example}

\begin{example}[Polynomials of degree 13]
\label{example:degree-13}
	Let $\beta_{13}$
	and $\alpha_{13}$
	be elements of $\QQbar$ satisfying
	\[
		\beta_{13}^2 - 5\beta_{13} + 3 = 0 
		\text{\quad and\quad }
		\alpha_{13}^2 + (\beta_{13}-2)\alpha_{13} + \beta_{13} = 0 .
	\]
	The involution
	$\sigma: \alpha_{13} \mapsto \beta_{13}/\alpha_{13}$ 
	generates $\Gal{\QQ(\alpha_{13})/\QQ(\beta_{13})}$.
	Observe that
	$\QQ(\beta_{13}) = \QQ(\sqrt{13})$ 
	is a real quadratic field,
	and $\QQ(\alpha_{13}) = \QQ(\sqrt{-3\sqrt{13}+1})$ 
	is an imaginary quadratic extension of~$\QQ(\beta_{13})$;
	so $\QQ(\alpha_{13})$ is a CM-field, and
	$\sigma$ acts as complex conjugation.

	Let $f_{13}$ be the polynomial of degree $13$ over $\QQ(\alpha_{13})[t]$
	defined in Table~\ref{table:degree-13-coefficients}:
	\[
		\begin{array}{r@{\;}c@{\;}l}
		f_{13}(x) 
		& = &
		\frac{1}{13} x^{13}
		+
        	((9 \beta_{13} - 39) \alpha_{13} - 6 \beta_{13} + 24) t x^{11}
		\\
		& & \quad \quad \quad {}
		+ ((9 \beta_{13} - 39) \alpha_{13} - 12 \beta_{13} + 51) t x^{10}
		+ \cdots
		\end{array}
	\]
	(note $f_{13}$ is the polynomial of~\cite[\S5.3]{CNC} with $a_1 = \alpha_{13}$).
	\begin{table}
	\caption{Coefficients of the polynomial $f_{13}$ (from Example~\ref{example:degree-13})}
	\label{table:degree-13-coefficients}
	\begin{center}
	\begin{tabular}{|r|l|}
		\hline
		$d$ & Coefficient of $x^d$ in $f_{13}$ \\
		\hline
		\hline
		$ 13 $ & $ 1/13 $ \\
		\hline
		$ 12 $ & $ 0 $ \\
		\hline
		$ 11 $ & $ ((9 \beta_{13} - 39) \alpha_{13} - 6 \beta_{13} + 24) t $ \\
		\hline
		$ 10 $ & $ ((9 \beta_{13} - 39) \alpha_{13} - 12 \beta_{13} + 51) t $ \\
		\hline
		$ 9 $ & $ ((-174 \beta_{13} + 753) \alpha_{13} + (519 \beta_{13} - 2217)) t^2 $ \\
		\hline
		$ 8 $ & $ ((1620 \beta_{13} - 6966) \alpha_{13} - 36 \beta_{13} + 162) t^2 $ \\
		\hline
		$ 7 $ & $ ((-29781 \beta_{13} + 128115) \alpha_{13} + (11988 \beta_{13} - 51651)) t^3 $ \\
		      & ${} + ((1638 \beta_{13} - 7047) \alpha_{13} - 1305 \beta_{13} + 5616) t^2 $ \\
		\hline
		$ 6 $ & $ ((-147933 \beta_{13} + 636498) \alpha_{13} + (135999 \beta_{13} - 585198)) t^3 $ \\
		\hline
		$ 5 $ & $ ((503631 \beta_{13} - 2166939) \alpha_{13} - 585387 \beta_{13} + 2518938) t^4 $ \\
		      & ${} + ((-18036 \beta_{13} + 77598) \alpha_{13} + (119934 \beta_{13} - 516051)) t^3 $ \\
		\hline
		$ 4 $ & $ ((-1130922 \beta_{13} + 4866156) \alpha_{13} - 1672488 \beta_{13} + 7196364) t^4 $ \\
		      & ${} + ((71604 \beta_{13} - 308097) \alpha_{13} - 37719 \beta_{13} + 162297) t^3 $ \\
		\hline
		$ 3 $ & $ ((1827441 \beta_{13} - 7863156) \alpha_{13} + (2618325 \beta_{13} - 11266209)) t^5 $ \\
		      & ${} + ((-8005635 \beta_{13} + 34446465) \alpha_{13} + (3453192 \beta_{13} - 14858316)) t^4 $ \\
		\hline
		$ 2 $ & $ ((50157306 \beta_{13} - 215815671) \alpha_{13} - 31620618 \beta_{13} + 136056429) t^5 $ \\
		      & ${} + ((-3343518 \beta_{13} + 14386410) \alpha_{13} + (3744792 \beta_{13} - 16113006)) t^4 $ \\
		\hline
		$ 1 $ & $ ((-27171504 \beta_{13} + 116912916) \alpha_{13} + (11138796 \beta_{13} - 47927700)) t^6 $ \\
		      & ${} + ((73616121 \beta_{13} - 316753659) \alpha_{13} - 96852267 \beta_{13} + 416733579) t^5 $ \\
		      & ${} + ((770472 \beta_{13} - 3315168) \alpha_{13} - 303912 \beta_{13} + 1307664) t^4 $ \\
		\hline
		$ 0 $ & $ ((-48359916 \beta_{13} + 208081872) \alpha_{13} - 48359916 \beta_{13}) t^6 $ \\
		      & ${} + ((-13260672 \beta_{13} + 57057696) \alpha_{13} - 13260672 \beta_{13}) t^5 $ \\
		\hline
	\end{tabular}
	\end{center}
	\end{table}
	We have a nontrivial factorization
	\(
		f_{13}(x_1) - f_{13}^\sigma(x_2) 
		= 
		A_{13}(x_1,x_2)B_{13}(x_1,x_2)
	\),
	where
	\[
		\begin{array}{r@{\;}l}
		A_{13}(x_1,x_2)
		= 
		&
                x_1^4  + x_2^4
                + (\beta_{13} - 3) x_1^2 x_2^2
                - 9(3 \beta_{13} - 14) t x_1 x_2
                + 12(47 \beta_{13} - 202) t^2
                \\ & {} 
		- ((\beta_{13} - 4) \alpha_{13} + 2) x_1^3 x_2
                + ((\beta_{13} - 4) \alpha_{13} - \beta_{13} + 3) x_1 x_2^3
                \\ & {} 
		+ 3((17 \beta_{13} - 73) \alpha_{13} - 12 \beta_{13} + 50) t x_1^2
                \\ & {} 
		- 3((17 \beta_{13} - 73) \alpha_{13} - 10 \beta_{13} + 45) t x_2^2
                \\ & {} 
		+ 3((5 \beta_{13} - 22) \alpha_{13} - 9 \beta_{13} + 38) t x_1
                \\ & {} 
		- 3((5 \beta_{13} - 22) \alpha_{13} + 2 \beta_{13} - 9) t x_2
		;
		\end{array}
	\]
	note that $A_{13}(x_2,x_1) = A_{13}(x_1,x_2)^\sigma$.
	Both $A_{13}$ and $B_{13}$ are absolutely irreducible.
\end{example}

\begin{example}[Polynomials of degree 15]
\label{example:degree-15}
	Let 
	$\alpha_{15}$ be an element of $\QQbar$ satisfying
	\[
		\alpha_{15}^2 - \alpha_{15} + 4 = 0 .
	\]
	The involution $\sigma : \alpha_{15} \mapsto 4/\alpha_{15}$ 
	generates $\Gal{\QQ(\alpha_{15})/\QQ}$.
	Observe that $\QQ(\alpha_{15}) = \QQ(\sqrt{-15})$ 
	is an imaginary quadratic field,
	and $\sigma$ acts as complex conjugation.

	Let $f_{15}$ be the polynomial of degree $15$ over $\QQ(\alpha_{15})[t]$
	defined in Table~\ref{table:degree-15-coefficients}:
	\[
		f_{15}(x) 
		=
		{\textstyle \frac{1}{15}}x^{15}
		+
        	(\alpha_{15} - 1) t x^{13}
		+
        	(\alpha_{15} + 7) t x^{12}
		+ 
		\cdots
	\]
	(our $f_{15}$ is the polynomial of~\cite[\S5.4]{CNC}
	with $a_1 = \alpha_{15}$.)
	\begin{table}
	\caption{Coefficients of the polynomial $f_{15}$ (from Example~\ref{example:degree-15})}
	\label{table:degree-15-coefficients}
		\begin{center}
		\begin{tabular}{|r|l|r|l|}
		\hline
		$d$  & Coefficient of $x^d$ in $f_{15}(x)$ &
		$d$  & Coefficient of $x^d$ in $f_{15}(x)$ \\
		\hline
		\hline
		$15$ & $ 1/15 $
		&
		$11$ & $ -(5 \alpha_{15} + 21) t^2 $
		\\ 
		\hline
		$14$ & $ 0 $
		&
		$10$ & $ (74 \alpha_{15} - 142) t^2 $
		\\
		\hline
		$13$ & $ (\alpha_{15} - 1) t $
		&
		$9$ & $ -\frac{1}{3} (261 \alpha_{15} - 349) t^3 + (90 \alpha_{15} + 240) t^2 $
		\\
		\hline
		$12$ & $ (\alpha_{15} + 7) t $
		&
		$8$ & $ -(649 \alpha_{15} + 703) t^3 $
		\\
		\hline
		$7$ & \multicolumn{3}{|l|}{$ (138 \alpha_{15} + 717) t^4 + (1380 \alpha_{15} - 5760) t^3 $}\\
		\hline
		$6$ & \multicolumn{3}{|l|}{$ -(2192 \alpha_{15} - 7756) t^4 + (2500 \alpha_{15} + 2800) t^3 $}\\
		\hline
		$5$ & \multicolumn{3}{|l|}{$ \frac{1}{5} (5835 \alpha_{15} - 4743) t^5 - (17790 \alpha_{15} - 5400) t^4 $}\\
		\hline
		$4$ & \multicolumn{3}{|l|}{$ (9699 \alpha_{15} + 6153) t^5 + (300 \alpha_{15} - 74400) t^4 $}\\
		\hline
		$3$ & \multicolumn{3}{|l|}{$ (243 \alpha_{15} - 3591) t^6 + (4680 \alpha_{15} + 92880) t^5 + (21375 \alpha_{15} + 4500) t^4 $}\\
		\hline
		$2$ & \multicolumn{3}{|l|}{$ (7254 \alpha_{15} - 28062) t^6 - (93600 \alpha_{15} - 165600) t^5 $}\\
		\hline
		$1$ & \multicolumn{3}{|l|}{$ -(945 \alpha_{15} + 675) t^7 + (52920 \alpha_{15} - 48600) t^6 - (54000 \alpha_{15} + 216000) t^5 $}\\
		\hline
		$0$ & \multicolumn{3}{|l|}{$ (675 \alpha_{15} - 5400) t^7 - (10800 \alpha_{15} - 86400) t^6 $}\\
		\hline
		\hline
		\end{tabular}
		\end{center}
	\end{table}
	We have a nontrivial factorization
	\(
		f_{15}(x_1) - (-f_{15}^\sigma(x_2)) 
		= 
		A_{15}(x_1,x_2)B_{15}(x_1,x_2)
	\),
	where 
	\[
		\begin{array}{r@{\;}l}
		A_{15}(x_1,x_2) = 
		&
		x_1^7 - (\alpha_{15} - 1) x_1^6 x_2 - 2 x_1^5 x_2^2 + (7 \alpha_{15} - 3) t x_1^5 + (\alpha_{15} + 1) x_1^4 x_2^3 
		\\ & {}
		+ 22 t x_1^4 x_2 + (5 \alpha_{15} + 65) t x_1^4 - (\alpha_{15} - 2) x_1^3 x_2^4 - (10 \alpha_{15} + 2) t x_1^3 x_2^2 
		\\ & {}
		- (50 \alpha_{15} - 70) t x_1^3 x_2 + (9 \alpha_{15} - 69) t^2 x_1^3 - 2 x_1^2 x_2^5 
		\\ & {}
		+ (10 \alpha_{15} - 12) t x_1^2 x_2^3 - 90 t x_1^2 x_2^2 + (39 \alpha_{15} + 33) t^2 x_1^2 x_2 
		\\ & {}
		+ (210 \alpha_{15} - 150) t^2 x_1^2 + \alpha_{15} x_1 x_2^6 + 22 t x_1 x_2^4 + (50 \alpha_{15} + 20) t x_1 x_2^3 
		\\ & {}
		- (39 \alpha_{15} - 72) t^2 x_1 x_2^2 + 450 t^2 x_1 x_2 
		\\ & {}
		- ((63 \alpha_{15} + 45) t^3 - (225 \alpha_{15} + 900) t^2) x_1 + x_2^7 - (7 \alpha_{15} - 4) t x_2^5 
		\\ & {}
		- (5 \alpha_{15} - 70) t x_2^4 - (9 \alpha_{15} + 60) t^2 x_2^3 - (210 \alpha_{15} - 60) t^2 x_2^2 
		\\ & {}
		+ ((63 \alpha_{15} - 108) t^3 
		- (225 \alpha_{15} - 1125) t^2) x_2 - 675 t^3
		;
		\end{array}
	\]
	note that $A_{15}(x_2,x_1) = A_{15}(x_1,x_2)^\sigma$.
	Both $A_{15}$ and $B_{15}$
	are absolutely irreducible.
\end{example}

\begin{example}[Polynomials of degree 21]
\label{example:degree-21}
	Let $\alpha_{21}$ be an element of $\QQbar$
	satisfying
	\[
		\alpha_{21}^2 - \alpha_{21} + 2 = 0 .
	\]
	The involution $\sigma: \alpha_{21} \mapsto 2/\alpha_{21}$ 
	generates $\Gal{\QQ(\alpha_{21})/\QQ}$;
	note that $\QQ(\alpha_{21}) = \QQ(\sqrt{-7})$
	is an imaginary quadratic field,
	and $\sigma$ acts as complex conjugation.

	Let $f_{21}$ be the polynomial of degree~$21$ over $k$
	defined in Table~\ref{table:degree-21-coefficients}:
	\[
		f_{21}(x)
		=
		x^{21}
		+
        	(42 \alpha_{21}+ 42) x^{19}
		+
        	(84 \alpha_{21}+ 84) x^{18}
		+ \cdots
	\]
	(note $f_{21}(x) = 2^{21}g(x/2)$, 
	where $g$ is the polynomial of~\cite[\S5.5]{CNC}
	with $a_1 = \alpha_{21}$.)
	\begin{table}
	\caption{Coefficients of the polynomial $f_{21}$ (from Example~\ref{example:degree-21})}
	\label{table:degree-21-coefficients}
		\begin{center}
		\begin{tabular}{|r|l||r|l|}
		\hline
		$d$ & Coefficient of $x^d$ in $f_{21}(x)$ &
		$d$ & Coefficient of $x^d$ in $f_{21}(x)$ \\
		\hline
		\hline
		$21$ & $1$ &
		$20$ & $0$ \\
		\hline
		$19$ & $42 \alpha_{21}+ 42$ &
		$18$ & $84 \alpha_{21}+ 84$ \\
		\hline
		$17$ & $2331 \alpha_{21}- 861$ &
		$16$ & $8820 \alpha_{21}- 2604$ \\
		\hline
		$15$ & $46816 \alpha_{21}- 64568$ &
		$14$ & $227136 \alpha_{21}- 306320$ \\
		\hline
		$13$ & $417060 \alpha_{21}- 1450470$ &
		$12$ & $1249248 \alpha_{21}- 6783504$ \\
		\hline
		$11$ & $-1650124 \alpha_{21}- 18355540$ &
		$10$ & $-25341624 \alpha_{21}- 54772872$ \\
		\hline
		$9$ & $-99408078 \alpha_{21}- 104516426$ &
		$8$ & $-414193752 \alpha_{21}- 32069128$ \\
		\hline
		$7$ & $-1090995696 \alpha_{21}+ 266146344$ &
		$6$ & $-2279293856 \alpha_{21}+ 2006258800$ \\
		\hline
		$5$ & $-4341402044 \alpha_{21}+ 5721876405$ &
		$4$ & $-4332603072 \alpha_{21}+ 10737937392$ \\
		\hline
		$3$ & $-2459323342 \alpha_{21}+ 18242100282$ &
		$2$ & $1708403396 \alpha_{21}+ 16523766868$ \\
		\hline
		$1$ & $8637088971 \alpha_{21}+ 9205492695$ &
		$0$ & $4696767684 \alpha_{21}$ \\
		\hline
		\end{tabular}
		\end{center}
	\end{table}
	We have a nontrivial factorization
	$f_{21}(x_1) - f_{21}^\sigma(x_2) = A_{21}(x_1,x_2)B_{21}(x_1,x_2)$,
	where 
	\[
		\begin{array}{r@{\;}l}
		A_{21}(x_1,x_2) = 
		&
		x_1^5 + (\alpha_{21} + 1) x_1^4 x_2 + 2 \alpha_{21} x_1^3 x_2^2 + (10 \alpha_{21} + 18) x_1^3 
		\\ & {}
		+ (2 \alpha_{21} - 2) x_1^2 x_2^3 + (32 \alpha_{21} - 8) x_1^2 x_2 + (20 \alpha_{21} + 4) x_1^2 
		\\ & {}
		+ (\alpha_{21} - 2) x_1 x_2^4 + (32 \alpha_{21} - 24) x_1 x_2^2 + (32 \alpha_{21} - 16) x_1 x_2 
		\\ & {}
		+ (107 \alpha_{21} + 55) x_1 - x_2^5 + (10 \alpha_{21} - 28) x_2^3 + (20 \alpha_{21} - 24) x_2^2 
		\\ & {}
		+ (107 \alpha_{21} - 162) x_2 + 136 \alpha_{21} - 68
		.
		\end{array}
	\]
	Note that $A_{21}(x_1,x_2) = -A_{21}^\sigma(x_2,x_1)$.
	Both $A_{21}$ and $B_{21}$ are absolutely irreducible.
\end{example}

\begin{example}[Polynomials of degree 31]
\label{example:degree-31}
	Let $\alpha_{31}$ and $\beta_{31}$ be elements of $\QQbar$
	satisfying
	\[
		\beta_{31}^3 - 13\beta_{31}^2 + 46\beta_{31} - 32 = 0 
		\text{\quad and\quad }
		\alpha_{31}^2 - 1/2(\beta_{31}^2 - 7\beta_{31} + 4)\alpha_{31} + \beta_{31} = 0 
		.
	\]
	The involution $\sigma: \alpha_{31} \mapsto \beta_{31}/\alpha_{31}$
	generates $\Gal{\QQ(\alpha_{31}/\QQ(\beta_{31})}$;
	note that $\QQ(\beta_{31})$ is a totally real cubic field,
	and $\QQ(\alpha_{31})$ is a totally imaginary quadratic extension of $\QQ(\beta_{31})$;
	so $\QQ(\alpha_{31})$ is a CM-field,
	and $\sigma$
	acts as complex conjugation.

	Let $f_{31}$ be the polynomial of degree~$31$ over~$k$
	defined in Tables~\ref{table:degree-31-coefficients-1}
	and~\ref{table:degree-31-coefficients-2}:
	\[
		\begin{array}{r@{\;}l}
		f_{31}(x) = 
		&
		\frac{1}{31} x^{31} 
		- (\frac{1}{4} (\beta_{31}^2 - 5 \beta_{31} - 10) \alpha_{31} - (\beta_{31}^2 - 7 \beta_{31} + 12)) x^{29} 
		\\ & {}
		- (\frac{1}{2} (\beta_{31}^2 - 5 \beta_{31} - 10) \alpha_{31} - (2 \beta_{31}^2 - 14 \beta_{31} + 24)) x^{28} 
		+ \cdots
		\end{array}
	\]
	(note $f_{31}(x) = 2^{31}g(x/2)/31$,
	where $g$ is the polynomial of~\cite[\S5.6]{CNC}
	with $a_1 = \alpha_{31}$).
	\begin{table}
	\caption{Coefficients of the polynomial $f_{31}$ (from Example~\ref{example:degree-31}): degrees $14$ through $31$}
	\label{table:degree-31-coefficients-1}
		\begin{center}
		\begin{tabular}{|r|l|}
		\hline
		$d$ & Coefficient of $x^d$ in $f_{31}(x)$ \\
		\hline
		\hline
		$31$ & $1/31$ \\
		\hline
		$30$ & $0$ \\
		\hline
		$29$ & $\scriptstyle \frac{-1}{4} (\beta_{31}^2 - 5 \beta_{31} - 10) \alpha_{31} + \beta_{31}^2 - 7 \beta_{31} + 12$ \\
		\hline
		$28$ & $\scriptstyle \frac{-1}{2} (\beta_{31}^2 - 5 \beta_{31} - 10) \alpha_{31} + 2 \beta_{31}^2 - 14 \beta_{31} + 24$ \\
		\hline
		$27$ & $\scriptstyle \frac{1}{4} (43 \beta_{31}^2 - 1011 \beta_{31} + 2854) \alpha_{31} + \frac{1}{2} (453 \beta_{31}^2 - 3055 \beta_{31} + 3248)$ \\
		\hline
		$26$ & $\scriptstyle (41 \beta_{31}^2 - 977 \beta_{31} + 2802) \alpha_{31} + 886 \beta_{31}^2 - 5986 \beta_{31} + 6496$ \\
		\hline
		$25$ & $\scriptstyle \frac{-1}{4} (17521 \beta_{31}^2 - 74509 \beta_{31} - 60450) \alpha_{31} + 14092 \beta_{31}^2 - 77272 \beta_{31} + 68380$ \\
		\hline
		$24$ & $\scriptstyle \frac{-1}{2} (48519 \beta_{31}^2 - 204491 \beta_{31} - 184718) \alpha_{31} + 80184 \beta_{31}^2 - 442624 \beta_{31} + 403208$ \\
		\hline
		$23$ & $\scriptstyle \frac{-1}{4} (1776161 \beta_{31}^2 - 9373621 \beta_{31} + 3292454) \alpha_{31} 
		+ \frac{1}{2} (2041603 \beta_{31}^2 - 11554557 \beta_{31} + 8612300)$ \\
		\hline
		$22$ & $\scriptstyle -(2942318 \beta_{31}^2 - 15455046 \beta_{31} + 5475220) \alpha_{31} 
		+ 7037348 \beta_{31}^2 - 40203740 \beta_{31} + 30052880$ \\
		\hline
		$21$ & $\scriptstyle \frac{-1}{4} (109481293 \beta_{31}^2 - 596329857 \beta_{31} + 368885054) \alpha_{31} 
		+ 46576255 \beta_{31}^2 - 265263537 \beta_{31} + 187276364$ \\
		\hline
		\multirow{2}{*}{$20$} 
		& $\scriptstyle \frac{-1}{2} (384855193 \beta_{31}^2 - 2112196605 \beta_{31} + 1408837958) \alpha_{31} 
		$ \\ & $ {} 
		\scriptstyle + 307371526 \beta_{31}^2 - 1742220634 \beta_{31} + 1208790968$ \\
		\hline
		\multirow{2}{*}{$19$} 
		& $\scriptstyle \frac{-1}{4} (5290184805 \beta_{31}^2 - 29820077413 \beta_{31} + 21851209042) \alpha_{31} 
		$ \\ & $ {} 
		\scriptstyle + \frac{1}{2} (2521588153 \beta_{31}^2 - 13978683691 \beta_{31} + 9274523664)$ \\
		\hline
		\multirow{2}{*}{$18$}
		& $\scriptstyle -(8697236749 \beta_{31}^2 - 49763738685 \beta_{31} + 38332116082) \alpha_{31} 
		$ \\ & $ {} 
		\scriptstyle + 5911141274 \beta_{31}^2 - 32035079054 \beta_{31} + 20126371040$ \\
		\hline
		\multirow{2}{*}{$17$} 
		& $\scriptstyle \frac{-1}{4} (186111470445 \beta_{31}^2 - 1067698578649 \beta_{31} + 833400031142) \alpha_{31} 
		$ \\ & $ {} 
		\scriptstyle + 9484781350 \beta_{31}^2 - 47546236774 \beta_{31} + 16736919932$ \\
		\hline
		\multirow{2}{*}{$16$} 
		& $\scriptstyle \frac{-1}{2} (494148938071 \beta_{31}^2 - 2839948380571 \beta_{31} + 2256232777618) \alpha_{31} 
		$ \\ & $ {} 
		\scriptstyle - 61154690060 \beta_{31}^2 + 368281842924 \beta_{31} - 366207873944$ \\
		\hline
		\multirow{2}{*}{$15$} 
		& $\scriptstyle \frac{-1}{2} (2214031635615 \beta_{31}^2 - 12716268790027 \beta_{31} + 10156041792602) \alpha_{31} 
		$ \\ & $ {} 
		\scriptstyle - 960101407852 \beta_{31}^2 + 5535136704359 \beta_{31} - 4581193619353$ \\
		\hline
		\multirow{2}{*}{$14$} 
		& $\scriptstyle -(4484463959192 \beta_{31}^2 - 25746958551032 \beta_{31} + 20641481233168) \alpha_{31} 
		$ \\ & $ {} 
		\scriptstyle - 8423937387072 \beta_{31}^2 + 48228838157776 \beta_{31} - 38143305780784$ \\
		\hline
		\end{tabular}
		\end{center}
\end{table}\
\begin{table}
	\caption{Coefficients of the polynomial $f_{31}$ (from Example~\ref{example:degree-31}): degrees $13$ through $0$}
	\label{table:degree-31-coefficients-2}
		\begin{center}
		\begin{tabular}{|r|l|}
		\hline
		$d$ & Coefficient of $x^d$ in $f_{31}(x)$ \\
		\hline
		\hline
		\multirow{2}{*}{$13$} 
		& $\scriptstyle \frac{-1}{4} (63813876335979 \beta_{31}^2 - 367007052549207 \beta_{31} + 296370094708306) \alpha_{31} 
		$ \\ & $ {} 
		\scriptstyle - 52401417590341 \beta_{31}^2 + 299616088960507 \beta_{31} - 233801230247956$ \\
		\hline
		\multirow{2}{*}{$12$}& $
		\scriptstyle \frac{-1}{2} (84595067837587 \beta_{31}^2 - 488413358269471 \beta_{31} + 399412816680130) \alpha_{31} 
		$ \\ & $ {} 
		\scriptstyle - 289909376875898 \beta_{31}^2 + 1656226239390086 \beta_{31} - 1283082623470440$ \\
		\hline
		\multirow{2}{*}{$11$}& $
		\scriptstyle \frac{-1}{4} (276978123366339 \beta_{31}^2 - 1621224937178539 \beta_{31} + 1399602523915382) \alpha_{31} 
		$ \\ & $ {} 
		\scriptstyle - \frac{1}{2} (2756444217062133 \beta_{31}^2 - 15735604159262247 \beta_{31} + 12145338716741672)$ \\
		\hline
		\multirow{2}{*}{$10$}& $
		\scriptstyle (164996225556971 \beta_{31}^2 - 911562557305603 \beta_{31} + 591654846604694) \alpha_{31} 
		$ \\ & $ {} 
		\scriptstyle - 5775442801086222 \beta_{31}^2 + 32951684149353882 \beta_{31} - 25388487691873328$ \\
		\hline
		\multirow{2}{*}{$9$}& $
		\scriptstyle \frac{1}{4} (8153525016709589 \beta_{31}^2 - 46226784686942241 \beta_{31} + 34465661136373590) \alpha_{31} 
		$ \\ & $ {} 
		\scriptstyle - 21765717548444108 \beta_{31}^2 + 124141863300896800 \beta_{31} - 95543137393851316$ \\
		\hline
		\multirow{2}{*}{$8$}& $
		\scriptstyle \frac{1}{2} (21507787300535771 \beta_{31}^2 - 122360462847124879 \beta_{31} + 92829028744745354) \alpha_{31} 
		$ \\ & $ {} 
		\scriptstyle - 71879278651985336 \beta_{31}^2 + 409920655394903344 \beta_{31} - 315310665232998936$ \\
		\hline
		\multirow{2}{*}{$7$}& $
		\scriptstyle \frac{1}{4} (172549107727779319 \beta_{31}^2 - 982848727924637571 \beta_{31} + 750639722104375338) \alpha_{31} 
		$ \\ & $ {} 
		\scriptstyle - \frac{1}{2} (418768591310359209 \beta_{31}^2 - 2388174561757656643 \beta_{31} + 1836495177429186664)$ \\
		\hline
		\multirow{2}{*}{$6$}& $
		\scriptstyle (138365490236826262 \beta_{31}^2 - 788531695474992526 \beta_{31} + 604055823258954628) \alpha_{31} 
		$ \\ & $ {} 
		\scriptstyle - 530716158860110596 \beta_{31}^2 + 3026599060976364972 \beta_{31} - 2327045484274854432$ \\
		\hline
		\multirow{2}{*}{$5$}& $
		\scriptstyle \frac{1}{4} (1461494193805567097 \beta_{31}^2 - 8330939217188411741 \beta_{31} + 6391346186593069190) \alpha_{31} 
		$ \\ & $ {} 
		\scriptstyle - 1132691540565214443 \beta_{31}^2 + 6459518768862357533 \beta_{31} - 4965998974814592772$ \\
		\hline
		\multirow{2}{*}{$4$}& $
		\scriptstyle \frac{1}{2} (1590470411372385357 \beta_{31}^2 - 9067705413825934465 \beta_{31} + 6962808016837221182) \alpha_{31} 
		$ \\ & $ {} 
		\scriptstyle - 1998830101622128910 \beta_{31}^2 + 11398708269008017730 \beta_{31} - 8762745131128427944$ \\
		\hline
		\multirow{2}{*}{$3$}& $
		\scriptstyle \frac{1}{4} (5458654735992646373 \beta_{31}^2 - 31124897594589327589 \beta_{31} + 23912314632422881618) \alpha_{31} 
		$ \\ & $ {} 
		\scriptstyle + \frac{1}{2} (-5512701081507844017 \beta_{31}^2 + 31436273506520022779 \beta_{31} - 24164866978481400776)$ \\
		\hline
		\multirow{2}{*}{$2$}& $
		\scriptstyle (1756872157897042025 \beta_{31}^2 - 10018233805014343961 \beta_{31} + 7698964739179717386) \alpha_{31} 
		$ \\ & $ {} 
		\scriptstyle - 2631501460411936866 \beta_{31}^2 + 15005661005014590390 \beta_{31} - 11533152751494298576$ \\
		\hline
		\multirow{2}{*}{$1$}& $
		\scriptstyle \frac{1}{4} (6099047880687359369 \beta_{31}^2 - 34780055276291665989 \beta_{31} + 26734049819113493038) \alpha_{31} 
		$ \\ & $ {} 
		\scriptstyle - 1489705167473733478 \beta_{31}^2 + 8494536217258921566 \beta_{31} - 6527531886543984036$ \\
		\hline
		\multirow{2}{*}{$0$}& $
		\scriptstyle \frac{1}{2} (1290343630884751523 \beta_{31}^2 - 7358426308111535607 \beta_{31} + 5657092118674073402) \alpha_{31} 
		$ \\ & $ {} 
		\scriptstyle - 2127333184925614050 \beta_{31} + 1673979108081725054$ \\
		\hline
		\end{tabular}
		\end{center}
	\end{table}
	We have a nontrivial factorization
	$f_{31}(x_1) - f_{31}^\sigma(x_2) = A_{31}(x_1,x_2)B_{31}(x_1,x_2)$,
	where
	\[
	\begin{array}{r@{\;}l}
	A_{31}(x_1,x_2) = 
	&
	x_1^{15} + (\frac{1}{4} (\beta_{31}^2 - 9 \beta_{31} + 14) \alpha_{31} - \beta_{31} + 4)x_1^{14}x_2  
	\\ & {} + \cdots \\ & {}
    	+ (\frac{1}{4} (\beta_{31}^2 - 9 \beta_{31} + 14) \alpha_{31} + \frac{1}{2} (\beta_{31}^2 - 7 \beta_{31} + 2))x_1x_2^{14} - x_2^{15} 
	\end{array}
	\]
	is a polynomial of total degree $15$
	satisfying $A_{31}(x_1,x_2) = -A_{31}^\sigma(x_2,x_1)$.
	Both
	$A_{31}$ and $B_{31}$
	are absolutely irreducible.
\end{example}

If $(f,g)$ and $(f',g')$ are equivalent pairs of polynomials,
and $(\family{X},\family{Y})$ and $(\family{X}',\family{Y}')$
the families of pairs of curves associated to $(f,g)$ and $(f',g')$
by the linear or quadratic constructions 
of \S\ref{section:basic-construction},
then $(\family{X},\family{Y})$ and $(\family{X}',\family{Y}')$
are isomorphic.
Indeed,
suppose
$f'(x) = cf(a_1x + b_1) + d$
and $g'(x) = cg(a_2x + b_2) + d$
for 
some $a_1$, $b_1$, $a_2$, $b_2$, $c$, and $d$ in $k$
with $c$, $a_1$ and $a_2$ nonzero.
The isomorphism $(\family{X},\family{Y}) \to (\family{X'},\family{Y'})$
is defined by
\(
	(x_i,y_i) \mapsto (a_ix_1 + b_i, c^{1/2}y_i)
\)
and
\( s \mapsto (s + d)/c \)
for the linear construction,
and by
\( (x_i,y_i) \mapsto (a_ix_1 + b_i, cy_i) \)
and
\( (s_1,s_2) \mapsto ((s_2 + 2d)/c , (s_2 + ds_1 + d^2)/c^2 ) \)
for the quadratic construction.

\begin{lemma}
\label{lemma:family-dimension}
	With the notation of Examples~\ref{example:degree-7} through~\ref{example:degree-31}:
	\begin{enumerate}
		\item	For $n = 11$, $21$, $31$,
			the image of 
			the family
			$\family{X}: y^2 = f_n(x) + s$
			in $\hyperellipticmoduli{(n-1)/2}$
			is one-dimensional;
		\item	For $n = 7$, $13$, $15$,
			the image of 
			the family
			$\family{X}: y^2 = f_n(x) + s$
			in $\hyperellipticmoduli{(n-1)/2}$
			is two-dimensional;
		\item	For $n = 11$, $21$, $31$,
			the image of 
			the family
			$\family{X}: y^2 = f_n(x)^2 + s_1f_n(x) + s_2$
			in $\hyperellipticmoduli{(n-1)}$
			is two-dimensional;
		\item	For $n = 7$, $13$, $15$,
			the image of 
			the family
			$\family{X}: y^2 = f_n(x)^2 + s_1f_n(x) + s_2$
			in $\hyperellipticmoduli{(n-1)}$
			is three-dimensional.
	\end{enumerate}
\end{lemma}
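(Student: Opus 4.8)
The plan is to compute, for each $n$, the dimension of the image of the relevant family in the moduli space $\hyperellipticmoduli{g}$ of hyperelliptic curves of genus $g$ (where $g = (n-1)/2$ for the linear construction and $g = n-1$ for the quadratic construction) by quotienting the dimension of the parameter space $T$ by the dimension of the generic fibre of the moduli map. Recall that two hyperelliptic curves $y^2 = u(x)$ and $y^2 = v(x)$ are isomorphic if and only if $u$ and $v$ differ by the substitution $x \mapsto (ax+b)/(cx+d)$ together with a scaling of $y$; equivalently, the isomorphism class is determined by the $\PGammaL{2}{\closure{k}}$-orbit of the branch locus on $\mathbb{P}^1$ (together with the point at infinity, when $n$ is odd and the degree of $f_n$ is odd). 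So the strategy is: (i) identify the parameter space $T$ and its dimension --- namely $1$ for the linear families $y^2 = f_n(x)+s$ when $f_n$ has no $t$-parameter ($n = 11, 21, 31$), $2$ for the linear families when $f_n$ carries a parameter $t$ ($n = 7, 13, 15$), and correspondingly $2$ and $3$ for the quadratic families $y^2 = f_n(x)^2 + s_1 f_n(x) + s_2$; (ii) show the moduli map $T \to \hyperellipticmoduli{g}$ is generically finite, i.e. has $0$-dimensional generic fibre.

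For step (ii), the key point is that the substitutions available for hyperelliptic isomorphisms form the finite-dimensional group $\PGammaL{2}$ acting on $\mathbb{P}^1$, but our families are built from a \emph{fixed} indecomposable polynomial $f_n$ of large degree, and the deformation is only by the outer polynomial $F$ (of degree $1$ or $2$). I would argue that if two fibres over points $P, P'$ of $T$ were isomorphic, the isomorphism would have to carry the branch locus of $F_P(f_n(x))$ to that of $F_{P'}(f_n(x))$; since $\deg(F \circ f_n) = n$ or $2n$ is large compared to the $3$ (or $2$, counting the marked point at infinity) degrees of freedom in $\PGammaL{2}$, a generic such isomorphism is forced to be linear (an affine substitution $x \mapsto ax+b$), and then indecomposability of $f_n$ plus the rigidity discussion already given in the paper (the paragraph following Example~\ref{example:degree-31}, on equivalence of pairs) pins down the substitution up to finitely many choices and forces the corresponding relation on the $s_i$ to be trivial. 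Concretely, an affine substitution taking $f_n(x)^d + \cdots$ to $f_n(x)^d + \cdots$ must fix the leading behaviour, and composing with the known automorphisms of the configuration of roots of $f_n$ (coming from $\sigma$ and the structure in Examples~\ref{example:degree-7}--\ref{example:degree-31}) leaves only finitely many possibilities, each giving an isolated point in the fibre.

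The main obstacle, and where real work is needed, is verifying that the moduli map is \emph{dominant onto a subvariety of exactly} the claimed dimension --- i.e. that there is no unexpected drop caused by a one-parameter subfamily of $T$ all landing on a single curve, or by a hidden symmetry among the parameters $s_1, s_2$ (resp. $s, t$). I expect to handle this by exhibiting an explicit point of $T$ where the differential of the moduli map has full rank: equivalently, showing that the infinitesimal deformations of the curve coming from varying the $s_i$ (and $t$) are linearly independent in $H^1(X, T_X)$, which can be checked by a direct computation with the branch points, or by specializing to a convenient fibre (over a number field, or its reduction mod a small prime, exactly as in \S\ref{section:kernel-structure}) and computing that the moduli image is locally of the right dimension there. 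Since a generically finite map from an irreducible variety has image of the same dimension, and the dimension of the image can only jump up under specialization of the source to a larger-dimensional family (which cannot happen here as $T$ is already irreducible of the stated dimension), establishing full rank at one point suffices. I would therefore organise the proof as: fix $T$ and $\dim T$; observe $T$ is irreducible; show the moduli map has finite generic fibre by the rigidity/indecomposability argument above; conclude the image has dimension $\dim T$.
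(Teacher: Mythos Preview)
Your high-level strategy matches the paper's: both argue that the moduli map $T \to \hyperellipticmoduli{g}$ has finite fibres, so the image has dimension equal to $\dim T$. However, your execution of the finiteness step has a real gap, and the paper fills it by a more concrete argument.

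The gap is your claim that ``a generic such isomorphism is forced to be linear (an affine substitution $x \mapsto ax+b$)'' because the degree is large. This is not justified: hyperelliptic isomorphisms allow a full M\"obius transformation of $x$, and high degree alone does not exclude the case $\gamma \ne 0$. You then invoke indecomposability of $f_n$ and the equivalence discussion after Example~\ref{example:degree-31}, but that paragraph only says that equivalent pairs $(f,g)$ yield isomorphic families; it does not bound how many points of $T$ land on a given moduli point, which is what you need.

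The paper handles this by writing down coefficient conditions satisfied by \emph{every} curve in $\family{X}$. For part~(1), each fibre $y^2 = \sum c_i x^{n-i}$ satisfies $c_0 = 1$, $c_1 = 0$, $c_2 \ne 0$, and $c_3 = \kappa_n c_2$ for a fixed constant~$\kappa_n$. Given any isomorphism $(x,y)\mapsto((\alpha x+\beta)/(\gamma x+\delta),\,\epsilon y/(\gamma x+\delta)^{(n+1)/2})$, one treats both cases: if $\gamma=0$ these four conditions force $\alpha=1$, $\beta=0$; if $\gamma\ne 0$ then $\delta$ must be the negative of a root of the hyperelliptic polynomial ($n$ choices), after which the conditions determine $\alpha,\beta,\epsilon$ up to sign. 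Hence at most $n+1$ fibres are isomorphic to a given one. Parts~(3) and~(4) add two further conditions on $c_4, c_5$. No appeal to indecomposability is needed.

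Your alternative route --- verifying full rank of the differential at a single specialization --- would also work and is a genuinely different method, but you do not carry it out; for genera up to $30$ it would be a nontrivial calculation. The paper's coefficient-matching is elementary and uniform in~$n$.
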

\begin{proof}
	We will show that only finitely many curves in each family $\family{X}$
	can be isomorphic to a given element of $\family{X}$.
	This implies that intersection of $\family{X}(\QQbar)$ 
	with the isomorphism class of a curve $X$
	in $\family{X}(\QQbar)$ 
	is finite,
	and hence that the map from $\family{X}(\QQbar)$ into 
	the moduli space of hyperelliptic curves
	is finite.
	The dimension of the image of $\family{X}$ in the moduli space
	is then equal to the number of parameters of $\family{X}$.

	Consider (1):
	if $X: y^2 = c_0x^n + c_1x^{n-1} + c_2x^{n-2} + \cdots + c_n$
	is a hyperelliptic curve in the family $\family{X}$,
	then the coefficients $c_i$ satisfy conditions
	\[
		\text{(A):\ } c_0 = 1,\quad 
		\text{(B):\ } c_1 = 0,\quad 
		\text{(C):\ } c_2 \not= 0,\quad 
		\text{and}\quad 
		\text{(D):\ } c_3 = \kappa_n c_2,
	\]
	where $\kappa_{11} = 2/\alpha_{11}$,
	$\kappa_{21} = 2$, and $\kappa_{31} = 2$.

	If $X': (y')^2 = f_n(x') + s'$ is isomorphic to $X$,
	then there exists a birational map $\psi: X \to X'$
	defined by
	\[
		\psi: 
		(x,y) \mapsto (x',y') 
		= 
		\left( \frac{\alpha x + \beta}{\gamma x + \delta}, \frac{\epsilon y}{(\gamma x + \delta)^{(n+1)/2}}\right)
	\]
	with $\alpha$, $\beta$, $\gamma$, $\delta$, and $\epsilon$
	in $\QQbar$ sastisfying $\epsilon \not= 0$ and $\alpha\delta - \beta\gamma \not= 0$,
	and $X'$ has a defining equation
	$X': y^2 = \epsilon^{-2}(\gamma x + \delta)^{n+1}(f_n((\alpha x + \beta)/(x + \delta)) + s)$.

	If $\gamma = 0$,
	then we may take $\delta = 1$,
	so
	$\psi(x,y) = (\alpha x + \beta, \epsilon y)$.
	If $X'$ is in $\family{X}$
	then it satisfies (A), (B), (C), and (D).
	Condition (A) implies $\alpha^n = \epsilon^2$,
	while (B) forces $\beta = 0$.
	The coefficients of $x^{n-2}$ and $x^{n-3}$
	in $f_n(\alpha x) + s$
	are then $\alpha^{n-2}c_2$ and $\alpha^{n-3}c_3 = \alpha^{n-3}\kappa_n c_2$,
	whereupon 
	(C) and (D) imply $\alpha = 1$,
	and hence $\epsilon = \pm 1$.
	We conclude that $\psi$ must be either the identity map
	or the hyperelliptic involution, depending on the sign of $\epsilon$;
	in either case, $X' = X$.

	If $\gamma \not= 0$, then we may take $\gamma = 1$.
	For the hyperelliptic polynomial of $X'$ to have degree $n$
	we must have $\delta = -\rho$,
	where $\rho$ is one of the roots of $f_n(x) + s$.
	Conditions (A), (B), (C), and (D) 
	then uniquely determine $\alpha$, $\beta$, $\gamma$, 
	and $\epsilon$ (up to sign)
	in terms of $\rho$ and $\kappa_n$.
	Since there were only $n$ possible choices of $\rho$,
	we find that there are only $2n$ possible choices for $\psi$,
	and only $n$ modulo the hyperelliptic involution.
	
	We have shown that there are only $n+1$ possible defining equations
	for curves in $\family{X}$ isomorphic to $X$
	(in fact, each corresponds to a choice of Weierstrass point of~$X$).
	has a unique defining equation (since the coefficient of $x^0$
	in the defining equation uniquely determines a point of the
	parameter space);
	hence
	there are at most $n+1$ curves in $\family{X}$
	isomorphic to $X$.

	The proof is identical for (2),
	though in this case we must restrict to the open subfamily of $\family{X}$
	where $t \not= 0$ (so that (B) holds),
	and take $\kappa_7 = 1$, $\kappa_{13} = -((2\beta_{13} - 9)\alpha_6 - \beta_{13} + 3)/3$,
	and $\kappa_{15} = -2\alpha_{15} + 1$.
	Again, each curve in $\family{X}$ has a unique defining equation:
	the coefficients of $x^0$ and $x^{n}$ uniquely determine a
	point of the parameter space.

	The proof for (3) and (4)
	is similar, and we only sketch it here.
	This time the curves $X: y^2 = \sum_{i=0}^{2n}c_i x^{2n-i}$ in $\family{X}$ 
	(or the subfamily where $t = 0$ in (4))
	satisfy 
	\[
		\begin{array}{l@{\quad }l@{\quad }l}
			\text{(A'):}\ c_0 = 1,
			&
			\text{(B'):}\ c_1 = 0,
			&
			\text{(C'):}\ c_2 \not= 0,
			\\
			\text{(D'):}\ c_3 = \kappa_n c_2,
			&
			\text{(E'):}\ c_4 \not= 0,
			&
			\text{(F'):}\ c_5 = \lambda_n c_4,
		\end{array}
	\]
	with $\kappa_n$ defined as above
	and 
	\[
		\begin{array}{r@{\;}l}
		\lambda_{7}  = & \frac{1}{277}(44\alpha_{7} + 502) , \\
		\lambda_{11} = & \frac{-1}{1049}(1444\alpha_{11} + 1292) , \\
		\lambda_{13} = & \frac{-1}{24470889}(32177912\beta_{13} - 144562170)\alpha_{13} - 14922610\beta_{13} + 44742102) , \\
		\lambda_{15} = & \frac{-1}{3061}(11624\alpha_{15} - 8242) , \\
		\lambda_{21} = & \frac{-1}{24889}(1872\alpha_21{21} - 98252) ,\ \ \text{and}\\
		\lambda_{31} = & \frac{1}{5572804315201}((-23763234474\beta_{31}^2 + 308913876190\beta_{31} - 904140145396)\alpha_{31} \\
				& {} + (45939160324\beta_{31}^2 - 413033009792\beta_{31} + 22556391264028)) 
		. 
		\end{array}
	\]
	As before,
	the defining equation of any curve in $\family{X}$
	isomorphic to $X$
	is uniquely determined 
	by (A'), (B'), (C'), (D'), (E'), (F'),
	and the choice of a root of $f_n(x)^2 + s_1f_n(x) + s_2$.
	The curves in $\family{X}$ have unique defining equations,
	since
	the coefficients of $x^0$ and $x^n$ (and $x^{2n-2}$ in (4))
	uniquely determine the corresponding point of the parameter space.
	Hence there are at most $2n$ curves in $\family{X}$ isomorphic to~$X$.
\end{proof}

\section{
	Explicit Complex and Real Multiplications
}
\label{section:CM-and-RM}

We now apply the methods of \S\ref{section:basic-construction} 
and~\S\ref{section:kernel-structure}
to the factorizations in Examples~\ref{example:cyclic-polynomials}
and~\ref{example:Dickson-polynomials}.
Most of the resulting families
have already been investigated
elsewhere,
so we treat them only briefly here.
Throughout this section
$n$ denotes an odd prime.

\begin{example}
\label{example:linear-cyclic}
	The linear construction on $(x^n,x^n)$ yields a family
	$(\family{X},\family{X})$
	of pairs of hyperelliptic curves of genus $(n-1)/2$,
	defined by
	$\family{X}: y_i^2 = x_i^n + s$.
	The curves in $\family{X}$ are all isomomorphic
	to the curve $X: y_i^2 = x_i^n + 1$
	(via $(x_i,y_i) \mapsto (\sqrt[n]{s}x_i,\sqrt{s}y_i)$).
	The Jacobian $\Jac{X}$
	is absolutely simple by~\cite[Example 8.4.(1)]{Shimura}.
	The correspondence 
	$C = \variety{y_1 - y_2, x_1 - \zeta_n^ex_2}$
	on $\product[]{X}{X}$
	induces an endomorphism~$\phi$ of $\Jac{X}$.
	Clearly $d(x_1^i)/y_1 = \zeta_n^{ie}d(x_2^i)/y_2$ on $C$,
	so
	\[
		\differentialmatrix[X,Y]{\phi} 
		=
		\mathrm{diag}(\zeta_n^e, \zeta_n^{2e}, \ldots, \zeta_n^{(n-1)e/2})
		.
	\]
	The factors $x_1 - \zeta_n^ex_2$ of $x_1^n - x_2^n$
	therefore correspond to explicit generators for a subring of $\End(\Jac{X})$
	isomorphic to~$\ZZ[\zeta_n]$.
\end{example}

\begin{example}
\label{example:reducible-cyclotomic}
	The quadratic construction 
	on $(x^n, x^n)$
	yields a two-parameter family 
	$(\family{X}: y_1^2 = x_1^{2n} + s_1 x_1^n + s_2,\ \family{X}: y_2^2 = x_2^{2n} + s_1 x_2^n + s_2)$
	of pairs of hyperelliptic curves of genus $n - 1$.
	Twisting by $(x_i,y_i) \mapsto (s_2^{1/{2n}}x_i,s_2^{1/2}y_i)$,
	we reduce to the one-parameter family
	$\family{X'}: y^2 = x^{2n} + s_1 x^n + 1$
	of~\cite[Remark after Proposition~3]{Tautz-Top-Verberkmoes}.
	The family~$\family{X'}$ has an involution $\iota : (x,y) \mapsto (1/x,y/x^n)$
	which is clearly not the hyperelliptic involution,
	so $\Jacfamily{X'}$ is reducible.
	The correspondences $\variety{y_1 - y_2, x_2 - \zeta_n^ix_1}$
	on $\product{\family{X'}}{\family{X'}}$
	induce
	endomorphisms 
	generating a subring of $\End(\Jacfamily{\family{X}'})$
	isomorphic to $\ZZ[\zeta_n]$,
	as in Example~\ref{example:linear-cyclic}.
	The quotient of $\family{X}'$
	by $\subgrp{\iota}$
	is a one-parameter family of curves of genus $(n-1)/2$
	whose Jacobians have Real Multiplication by $\ZZ[\zeta_n + \zeta_n^{-1}]$.
\end{example}

\begin{example}
\label{example:TTV-curves}
	Let $D_n$ and $A_{n,i}$ be defined as in Example~\ref{example:Dickson-polynomials}.
	The linear construction
	on $(D_n(x),D_n(x))$
	yields
	a one-parameter family 
	\[
		(\family{X}: y_1^2 = D_n(x_1) + s ,\ 
		\family{X}: y_2^2 = D_n(x_2) + s);
	\]
	of pairs of hyperelliptic curves of genus $(n-1)/2$
	over $\QQ$.
	The family $\family{X}$ is identical to 
	the family $\family{C}_t$
	of~\cite[Theorem 1]{Tautz-Top-Verberkmoes}.
	It is shown in~\cite{Tautz-Top-Verberkmoes} that
	the endomorphisms induced by $\variety{y_1-y_2,A_{n,i}(x_1,x_2)}$
	for $1 \le i \le (n-1)/2$
	generate a subring 
	of $\End(\Jacfamily{\family{X}})$
	isomorphic to $\ZZ[\zeta_n + \zeta_n^{-1}]$
	(while $\variety{y_1 - y_2, x_1 - x_2}$
	clearly induces $\multiplication[\Jacfamily{\family{X}}]{1}$).
	It is shown
	that $\Jacfamily{\family{X}}$ is absolutely simple
	for $n > 5$
	in~\cite[Corollary 6]{Tautz-Top-Verberkmoes}
	and for $n = 5$ in~\cite[Remark 15]{Kohel-Smith}.
	The cases $n = 5$ and $n = 7$ of this construction 
	appear as families of efficiently computable endomorphisms
	in~\cite{Kohel-Smith}.
\end{example}

\begin{example}
	Applied to $(D_n(x),D_n(x))$,
	the quadratic construction
	yields a two-parameter family 
	\[
		(\family{X}: y_1^2 = D_n(x_1)^2 + s_1D_n(x_1) + s_2,\ 
		\family{X}: y_2^2 = D_n(x_2)^2 + s_1D_n(x_2) + s_2)
	\]
	of pairs of hyperelliptic curves of genus $n-1$.
	We have a nontrivial factorization
	\[
		\begin{array}{r@{\;}l}
		&
		( D_n(x_1)^2 + s_1D_n(x_1)) - (D_n(x_2)^2 + s_1D_n(x_2)
		\\ = &
		(D_n(x_1) - D_n(x_2))(D_n(x_1) + D_n(x_2) + s_1)
		\\ = &
		\big((x_1 - x_2)\prod_{i=1}^{(n-1)/2} A_{n,i}(x_1,x_2) \big)((D_n(x_1) + D_n(x_2) + s_1))
		.
		\end{array}
	\]
	The correspondences $\variety{y_1-y_2,A_{n,i}(x_1,x_2)}$ on $\product{X}{Y}$
	induce endomorphisms~$\phi_i$ of~$\Jacfamily{X}$
	for $1 \le i \le (n-1)/2$;
	the diagonal correspondence $\variety{y_1-y_2,x_1-x_2}$
	induces $\multiplication[\Jac{X}]{1}$.
	The matrix $\differentialmatrix[\family{X},\family{X}]{\phi_i}$
	is an endomorphism of $\differentials(X)$.
	Since $\differentialmatrix[\family{X},\family{X}]{\phi_i}$ is lower-triangular,
	its characteristic polynomial 
	(and hence that of $\phi_i$) is
	\[
		P(x) = \prod_{j=1}^{(n-1)}(x - t_{j,j}),
	\]
	where $t_{j,j}$ is the $j^\mathrm{th}$ entry on the diagonal of $\differentialmatrix[\family{X},\family{X}]{\phi_i}$:
	that is, $t_{j,j}$ is the leading coefficient of the trace $t_j = \Tr^{\differentials(C_i)}_{\differentials(X)}(d(x_1^j)/y_1)$
	written as a polynomial in $x_2$.
	We~have
	\[
		A_{n,i}(x_1,x_2) 
		= 
		x_1^2 - (\zeta_n^i + \zeta_n^{-i})x_2\cdot x_1 + (x_2^2 + \zeta_n^i - \zeta_n^{-i})
		,
	\]
	so
	\[
		\begin{array}{r@{\;=\;}l}
			t_1 & (\zeta_n^i + \zeta_n^{-i})x_2 
				, \\
			t_2 & (\zeta_n^{2i} + \zeta_n^{-2i})x_2^2 - 2(\zeta_n^{i} - \zeta_n^{-i}) 
				,\ \text{and}\\
			t_j & (\zeta_n^i + \zeta_n^{-i})x_2t_{j-1} - (x_2^2 + \zeta_n^i - \zeta_n^{-i})t_{j-2}
				\ \text{for}\ j > 2;
		\end{array}
	\]
	in particular,
	the coefficients $t_{j,j}$ satisfy
	\( t_{1,1} = \zeta_n^i + \zeta_n^{-i} \),
	\( t_{2,2} = \zeta_n^{2i} + \zeta_n^{-2i} \),
	and
	\[
		t_{j,j} = (\zeta_n^i + \zeta_n^{-i})t_{j-1,j-1} - t_{j-2,j-2} 
		\quad \text{for}\ j > 2 .
	\]
	Solving the second-order linear recurrence, we find
	\( t_{j,j} = \zeta_n^{ij} + \zeta_n^{-ij} \)
	for all $j > 0$,
	so 
	\[
		P(x) = \prod_{j=1}^{(n-1)}(x - (\zeta_n^{ij} + \zeta_n^{-ij}))
		=
		m(x)^2,
	\]
	where \( m \)
	is the minimal polynomial of $\zeta_n + \zeta_n^{-1}$ over~$\QQ$;
	hence $m(\phi_i) = 0$.
	We conclude that 
	$\phi_i$ generates an explicit subring of $\End(\Jacfamily{X})$
	isomorphic to $\ZZ[\zeta_n + \zeta_n^{-1}]$.
\end{example}

\section{
	Families of explicit isogenies
}
\label{sec:examples}

We now apply the methods of \S\ref{section:basic-construction} 
and~\S\ref{section:kernel-structure}
to the factorizations in Examples~\ref{example:degree-7}
through~\ref{example:degree-31}.
The examples in this section
form the proof of Theorem~\ref{theorem:main}.

\begin{example}
\label{example:linear-7}
	Let $f_{7}$, $A_{7}$, $\alpha_{7}$, and $\sigma$ be as in 
	Example~\ref{example:degree-7}.
	The linear construction on
	$(f_{7},f_{7}^\sigma)$
	yields a two-parameter family
	\[
		\left( \family{X}  : y_1^2 = f_{7}(x_1) + s
		,\ 
		\family{Y}  : y_2^2 = f_{7}^\sigma(x_2) + s
		\right)
	\]
	of pairs of hyperelliptic curves of genus~$3$
	defined over $\QQ(\alpha_{7})$.
	Specializing $\family{X}$ at $(s,t) = (1,0)$
	we obtain the curve $X$ of Example~\ref{example:linear-cyclic}
	with $n = 7$,
	where we noted that $\Jac{X}$ was absolutely simple;
	hence the generic fibre of $\Jacfamily{X}$
	is absolutely simple.
	
	The correspondence
	\( \variety{y_1-y_2,A_{7}(x_1,x_2)} \)
	on 
	\( \product{\family{X} }{\family{Y} } \)
	induces a homomorphism
	\( \phi : \Jacfamily{\family{X} } \to \Jacfamily{\family{Y} } \).
	We have
	$$
		\differentialmatrix[X ,Y ]{\phi }
		=
		\left(
		\begin{array}{ccc}
     			\alpha_7 			& 0		& 0	\\
       			0 			& \alpha_7	& 0	\\
			(\alpha_7^\sigma-\alpha_7)t	& 0 		& \alpha_7^\sigma	\\
		\end{array}
		\right)
		,
	$$
	and therefore
	$$
		\differentialmatrix[X ,Y ]{\phi } 
		\differentialmatrix[Y ,X ]{\dualof{\phi }}
		= 
		\differentialmatrix[X ,Y ]{\phi } 
		\differentialmatrix[X ,Y ]{\phi }^\sigma
		=
		2I_{3} ,
	$$
	so $\compose{\dualof{\phi }}{\phi } = \multiplication[\Jacfamily{\family{X} }]{2}$;
	hence
	$\ker\phi \cong \isogenytype{2}{3}$
	by Lemma~\ref{lemma:kernel-structure}.
	The image of $\Jacfamily{X}$ in $\abelianmoduli{3}$
	is two-dimensional by Lemma~\ref{lemma:family-dimension}
	and Torelli's theorem.
	We conclude that $\phi$
	is a two-dimensional family of $\isogenytype{2}{3}$-isogenies
	of (generically) absolutely simple Jacobians,
	thus proving Theorem~\ref{theorem:main}
	for the first row of the table.
\end{example}

\begin{remark}
	More generally,
	given a hyperelliptic curve $X$ of genus~$3$
	and a maximal $2$-Weil isotropic subgroup $S$ of $\Jac{X}[2]$,
	there exists a (possibly reducible) curve $Y$
	of genus~$3$ and
	a $\isogenytype{2}{3}$-isogeny $\phi: \Jac{X} \to \Jac{Y}$
	with kernel~$S$
	(both $Y$ and $\phi$
	may be defined over a quadratic extension of the field of definition of $S$).
	In general, the curve $Y$ is \emph{not} hyperelliptic.
	An algorithm which computes equations for $Y$ and $\phi$
	in the case where $S$ is generated by differences of Weierstrass points
	appears in~\cite{Smith}
	(it is possible to show,
	using techniques similar to those of Lemma~\ref{lemma:two-rank},
	that the kernel of the isogeny 
	of Example~\ref{example:linear-7}
	is not such a subgroup).
	The case where $X$ is non-hyperelliptic
	is treated in~\cite{Lehavi--Ritzenthaler}.
\end{remark}

\begin{example}
\label{example:quadratic-7}
	Let $f_{7}$, $A_{7}$, $\alpha_{7}$, and $\sigma$ be as in
	Examples~\ref{example:degree-7}
	and~\ref{example:linear-7}.
	The quadratic construction on 
	$(f_{7},f_{7}^\sigma)$ yields a three-parameter family
	\[
		\left(
		\family{X}  : 
			y_1^2 = f_{7}(x_1)^2 + s_1f_{7}(x_1) + s_2
		,\ 
		\family{Y}  : 
			y_2^2 = f_{7}^\sigma(x_2)^2 + s_1f_{7}^\sigma(x_2) + s_2
		\right)
	\]
	of pairs of hyperelliptic curves of genus~$6$
	defined over $\QQ(\alpha_{7})$.
	Specializing $\family{X}$ at $(s_1,s_2,t) = (1,0,1)$ 
	and reducing modulo a prime of $\QQ(\alpha_{7})$ over $13$,
	we obtain a curve $\overline{X}$ over $\FF_{13^2}$.
	The Weil polynomial of $\Jac{\overline{X}}$ 
	is irreducible,
	and
	corresponds to the Weil coefficients
	\[
		w_1 = -16,
		w_2 = -46,
		w_3 = 3496,
		w_4 = -36993,
		w_5 = -464728,
		w_6 = 13747140 .
	\]
	Applying Lemma~\ref{lemma:simplicity-criterion},
	we see that $\Jac{\overline{X}}$
	is absolutely simple.
	Hence, the generic fibre of $\Jacfamily{X}$
	is absolutely simple.

	The correspondence
	\( \variety{y_1-y_2,A_{7}(x_1,x_2)} \)
	on
	\( \product{\family{X} }{\family{Y} } \)
	induces a homomorphism
	\( \phi : \Jacfamily{\family{X} } \to \Jacfamily{\family{Y} } \).
	We find that
	$$
		\differentialmatrix[X ,Y ]{\phi}
		=
		\left(\begin{array}{rrrrrr}
		\scriptstyle \alpha_7 & \scriptstyle 0 & \scriptstyle 0 & \scriptstyle 0 & \scriptstyle 0 & \scriptstyle 0 \\
		\scriptstyle 0 & \scriptstyle \alpha_7 & \scriptstyle 0 & \scriptstyle 0 & \scriptstyle 0 & \scriptstyle 0 \\
		\scriptstyle -(2 \alpha_7 + 1) t & \scriptstyle 0 & \scriptstyle \alpha_7^\sigma & \scriptstyle 0 & \scriptstyle 0 & \scriptstyle 0 \\
		\scriptstyle -(\alpha_7 + 4) t & \scriptstyle -2(\alpha_7 + 4) t & \scriptstyle 0 & \scriptstyle \alpha_7 & \scriptstyle 0 & \scriptstyle 0 \\
		\scriptstyle 7(\alpha_7 + 2) t^2 & \scriptstyle -2(2\alpha_7 + 1) t & \scriptstyle 3(\alpha_7^\sigma + 4) t & \scriptstyle 0 & \scriptstyle \alpha_7^\sigma & \scriptstyle 0 \\
		\scriptstyle 7(\alpha_7^\sigma + 4) t^2 & \scriptstyle -7(2\alpha_7 - 3) t^2 & \scriptstyle 3(\alpha_7^\sigma + 4) t & \scriptstyle 4(2\alpha_7^\sigma + 1) t & \scriptstyle 0 & \scriptstyle \alpha_7^\sigma
		\end{array}\right)
		.
	$$
	Since
	$
		\differentialmatrix[Y ,X ]{\dualof{\phi }} 
		= 
		\differentialmatrix[X ,Y ]{\phi }^\sigma
	$,
	we have
	$$
		\differentialmatrix[X ,X ]{\dualof{\phi }\phi }
		=
		\differentialmatrix[X ,Y ]{\phi } 
		\differentialmatrix[Y ,X ]{\dualof{\phi }}
		= 
		2I_{6} ,
	$$
	so $\compose{\dualof{\phi }}{\phi } = \multiplication[\Jacfamily{\family{X} }]{2}$;
	hence $\ker\phi \cong \isogenytype{2}{6}$
	by Lemma~\ref{lemma:kernel-structure}.
	The image of $\Jacfamily{X}$ in $\abelianmoduli{6}$
	is three-dimensional by Lemma~\ref{lemma:family-dimension}
	and Torelli's theorem.
	We conclude that $\phi$ is 
	a three-dimensional family of $\isogenytype{2}{6}$-isogenies
	of (generically) absolutely simple Jacobians,
	thus proving Theorem~\ref{theorem:main}
	for the third row of the table.
\end{example}

\begin{example}
\label{example:linear-11}
	Let $f_{11}$, $A_{11}$, $\alpha_{11}$, and $\sigma$
	be as in Example~\ref{example:degree-11}.
	The linear construction on $(f_{11},f_{11}^\sigma)$,
	yields a one-parameter family 
	\[
		\left(
		\family{X} : y_1^2 = f_{11}(x_1) + s 
		,\ 
		\family{Y} : y_2^2 = f_{11}^\sigma(x_2)  + s
		\right)
	\]
	of pairs of hyperelliptic curves of genus~$5$
	over $\QQ(\alpha_{11})$.
	Specializing $\family{X}$ at $s = 0$ 
	and reducing modulo a prime of $\QQ(\alpha_{11})$ over $7$,
	we obtain a curve $\overline{X}$ over $\FF_{49}$.
	The Weil polynomial of $\Jac{\overline{X}}$ 
	is irreducible, and
	corresponds to the Weil coefficients
	\[
		w_1 = 12,\ 
		w_2 = 28,\ 
		w_3 = -152,\ 
		w_4 = 3652,\ 
		w_5 = 53722\ 
		.
	\]
	Applying Lemma~\ref{lemma:simplicity-criterion},
	we see that $\Jac{\overline{X}}$
	is absolutely simple.
	Hence, the generic fibre of $\Jacfamily{X}$
	is absolutely simple.

	The correspondence
	\( \variety{A_{11}(x_1,x_2),y_1-y_2} \)
	on
	\( \product{\family{X} }{\family{Y} } \)
	induces a homomorphism
	\( \phi : \Jacfamily{\family{X} } \to \Jacfamily{\family{Y} } \).
	We find that
	$$
		\differentialmatrix[X ,Y ]{\phi } 
		= 
		\left(\begin{array}{rrrrr}
		\alpha_{11} & 0 & 0 & 0 & 0 \\
		0 & \alpha_{11}^\sigma & 0 & 0 & 0 \\
		\alpha_{11} + 6 & 0 & \alpha_{11} & 0 & 0 \\
		0 & 0 & 0 & \alpha_{11} & 0 \\
		-3(5\alpha_{11} - 3) & 4(2\alpha_{11} + 1) & 3(\alpha_{11} + 6) & 0 & \alpha_{11}
		\end{array}\right)
		.
	$$
	Since
	$
		\differentialmatrix[Y ,X ]{\dualof{\phi }}
		=	
		\differentialmatrix[X ,Y ]{\phi }^\sigma
	$,
	we have
	$\differentialmatrix[X ]{\dualof{\phi }\phi } = 3I_{5} $,
	so
	$\compose{\dualof{\phi }}{\phi } = \multiplication[\Jacfamily{\family{X} }]{3}$;
	hence $\ker\phi \cong \isogenytype{3}{5}$
	by Lemma~\ref{lemma:kernel-structure}.
	The image of $\Jacfamily{X}$ in $\abelianmoduli{5}$
	is one-dimensional by Lemma~\ref{lemma:family-dimension}
	and Torelli's theorem.
	We conclude that
	$\phi$ is a one-dimensional family of $\isogenytype{3}{5}$-isogenies
	of (generically) absolutely simple Jacobians,
	thus proving Theorem~\ref{theorem:main}
	for the second row of the table.
\end{example}

\begin{example}
\label{example:quadratic-11}
	Let $f_{11}$, $A_{11}$, $\alpha_{11}$, and $\sigma$
	be as in Examples~\ref{example:degree-11}
	and~\ref{example:linear-11}.
	The quadratic construction on $(f_{11},f_{11}^\sigma)$
	yields a two-parameter family
	\[
		\left(
		\family{X} : 
			y_1^2 = f_{11}(x_1)^2 + s_1f_{11}(x_1) + s_2 
		,\ 
		\family{Y} : 
			y_2^2 = f_{11}^\sigma(x_2)^2 + s_1f_{11}(x)  + s_2
		\right) 
	\]
	of pairs of hyperelliptic curves of genus~$10$
	defined over $\QQ(\alpha_{11})$.
	Specializing $\family{X}$ at $(s_1,s_2) = (1,0)$ 
	and reducing modulo a prime of $\QQ(\alpha_{11})$ over $7$,
	we obtain a curve $\overline{X}$ over $\FF_{49}$.
	The Weil polynomial of $\Jac{\overline{X}}$ 
	is irreducible, and
	corresponds to the Weil coefficients
	listed in Table~\ref{table:Weil-coefficients-quadratic-11}.
	\begin{table}
	\caption{Weil polynomial coefficients for Example~\ref{example:quadratic-11}}
	\label{table:Weil-coefficients-quadratic-11}
	\begin{center}
	\begin{tabular}{|r|l|r|l|r|l|r|l|r|l|}
	\hline
	$i$ & $w_i$ &
	$i$ & $w_i$ &
	$i$ & $w_i$ &
	$i$ & $w_i$ &
	$i$ & $w_i$ \\
	\hline
	\hline
	$1$ & $0$ & 
	$2$ & $-16$ &
	$3$ & $196$ &
	$4$ & $2024$ &
	$5$ & $2484$  \\
	$6$ & $35208$ &
	$7$ & $127220$ &
	$8$ & $10074824$ & 
	$9$ & $24089728$ &
	$10$ & $-169499466$ \\
	\hline
	\end{tabular}
	\end{center}
	\end{table}
	Applying
	Lemma~\ref{lemma:simplicity-criterion},
	we see that $\Jac{\overline{X}}$
	is absolutely simple.
	Hence, the generic fibre of $\Jacfamily{X}$
	is absolutely simple.

	The correspondence
	\( \variety{A_{11}(x_1,x_2),y_1-y_2} \)
	on
	\( \product{\family{X} }{\family{Y} } \)
	induces a homomorphism
	\( \phi: \Jacfamily{\family{X} } \to \Jacfamily{\family{Y} } \).
	The $10\!\times\! 10$ matrix
	\( \differentialmatrix[X ,Y ]{\phi } \)
	is lower-triangular,
	with diagonal entries
	\[
		\alpha_{11},\ 
		\alpha_{11}^\sigma,\ 
		\alpha_{11},\ 
		\alpha_{11},\ 
		\alpha_{11},\ 
		\alpha_{11}^\sigma,\ 
		\alpha_{11}^\sigma,\ 
		\alpha_{11}^\sigma,\ 
		\alpha_{11},\ 
		\alpha_{11}^\sigma,
	\]
	each of which is an element of norm $3$
	(we omit the other entries for lack of space).
	We therefore have
	\[
		\differentialmatrix[X ,Y ]{\phi }
		\differentialmatrix[Y ,X ]{\dualof{\phi }}
		=
		\differentialmatrix[X ,Y ]{\phi }
		\differentialmatrix[X ,Y ]{\phi }^\sigma
		= 
		3I_{10},
	\]
	so 
	\(
		\dualof{\phi }\phi  
		= 
		\multiplication[\family{X} ]{3}
	\);	
	hence $\ker \phi \cong \isogenytype{3}{10}$
	by Lemma~\ref{lemma:kernel-structure}.
	The image of $\Jacfamily{X}$ in $\abelianmoduli{10}$
	is two-dimensional by Lemma~\ref{lemma:family-dimension}
	and Torelli's theorem.
	We conclude that $\phi$ is 
	a two-dimensional family
	of $\isogenytype{3}{10}$-isogenies
	of (generically) absolutely simple Jacobians,
	thus proving Theorem~\ref{theorem:main}
	for the sixth row of the table.
\end{example}

\begin{example}
\label{example:linear-13}
	Let $f_{13}$, $A_{13}$, $\alpha_{13}$, $\beta_{13}$ and $\sigma$
	be as in Example~\ref{example:degree-13}.
	The linear construction on $(f_{13},f_{13}^\sigma)$
	yields a two-parameter family 
	\[
		\left(
		\family{X}  : 
		y_1^2 = f_{13}(x_1) + s
		,\ 
		\family{Y}  : 
		y_2^2 = f_{13}^\sigma(x_2) + s
		\right)
	\]
	of pairs of hyperelliptic curves of genus~$6$.
	Specializing $\family{X}$ at $(s,t) = (1,0)$,
	we obtain the curve $X$ of Example~\ref{example:linear-cyclic}
	with $n = 13$,
	which has an absolutely simple Jacobian;
	hence the generic fibre of $\Jacfamily{X}$
	is absolutely simple.

	The correspondence
	\( \variety{A_{13}(x_1,x_2),y_1-y_2} \)
	on
	\( \product{\family{X} }{\family{Y} } \)
	induces a homomorphism
	\( \phi : \Jacfamily{\family{X} } \to \Jacfamily{\family{Y} }  \).
	The $6\!\times\! 6$ matrix
	\(\differentialmatrix[X ,Y ]{\phi }\)
	is lower-triangular;
	if we set $e_1 := (\beta_{13} - 4)\alpha_{13} + 2$
	and $e_2 := \alpha + 1$,
	then the diagonal entries of 
	\( \differentialmatrix[X ,Y ]{\phi } \)
	are
	\[
		e_1,\ \  e_2, \ \ e_1, \ \ e_1^\sigma,\ \ e_2,\ \  e_2 ,
	\]
	each of which is an element of norm $3$ in $\QQ(\beta_{13})$.
	(we omit the other entries for lack of space).
	We therefore have
	\[
		\differentialmatrix[X ,Y ]{\phi }
		\differentialmatrix[Y ,X ]{\dualof{\phi }}
		=
		\differentialmatrix[X ,Y ]{\phi }
		\differentialmatrix[X ,Y ]{\phi }
		= 
		3I_{6}
		,
	\]
	so $\compose{\dualof{\phi }}{\phi } = \multiplication[\Jacfamily{\family{X} }]{3}$;
	hence $\ker\phi\cong\isogenytype{3}{6}$
	by Lemma~\ref{lemma:kernel-structure}.
	The image of $\Jacfamily{X}$ in $\abelianmoduli{6}$
	is one-dimensional by Lemma~\ref{lemma:family-dimension}
	and Torelli's theorem.
	We conclude that
	$\phi $ is a 
	one-dimensional family of $\isogenytype{3}{6}$-isogenies
	of (generically) absolutely simple Jacobians,
	thus proving Theorem~\ref{theorem:main}
	for the fourth row of the table.
\end{example}

\begin{example}
\label{example:quadratic-13}
	Let $f_{13}$, $A_{13}$, $\alpha_{13}$, $\beta_{13}$ and $\sigma$
	be as in Examples~\ref{example:degree-13}
	and~\ref{example:linear-13}.
	The quadratic construction on $(f_{13}, f_{13}^\sigma)$
	yields
	a three-parameter family 
	\[
		\left(
		\family{X}  : 
		y_1^2 = f_{13}(x_1)^2 + s_1f_{13}(x_1) + s_2
		,\ 
		\family{Y}  : 
		y_2^2 = f_{13}^\sigma(x_2)^2 + s_1f_{13}^\sigma(x_2) + s_2
		\right)
	\]
	of pairs of hyperelliptic curves of genus~$12$
	defined over $\QQ(\alpha_{13})$.
	Specializing $\family{X}$ at $(s_1,s_2,t) = (1,1,1)$ 
	and reducing modulo a prime of $\QQ(\alpha_{13})$ over $5$,
	we obtain a curve $\overline{X}$ over $\FF_{5^4}$.
	The Weil polynomial of $\Jac{\overline{X}}$ 
	is irreducible, and
	corresponds to the Weil coefficients
	listed in Table~\ref{table:Weil-coefficients-quadratic-13}.
	\begin{table}
	\caption{Weil polynomial coefficients for Example~\ref{example:quadratic-13}}
	\label{table:Weil-coefficients-quadratic-13}
	\begin{center}
	\begin{tabular}{|r|l|r|l|r|l|r|l|}
	\hline
	$i$ & $w_i$ & $i$ & $w_i$ &
	$i$ & $w_i$ & $i$ & $w_i$ \\
	\hline
	\hline
	$1$ & $20$ &
	$4$ & $351295$ &
	$7$ & $67298212$ &
	$10$ & $-49877419547660$ \\
	$2$ & $-230$ &
	$5$ & $1293764$ &
	$8$ & $137879604915$ &
	$11$ & $1975333453052116$ \\
	$3$ & $-9232$ &
	$9$ & $-1707055263168$ &
	$6$ & $-204257742$ &
	$12$ & $119629530410659866$ \\
	\hline
	\end{tabular}
	\end{center}
	\end{table}
	Applying Lemma~\ref{lemma:simplicity-criterion},
	we see that $\Jac{\overline{X}}$
	is absolutely simple.
	Hence, the generic fibre of $\Jacfamily{X}$
	is absolutely simple.

	The correspondence
	\( \variety{A_{13}(x_1,x_2),y_1-y_2}) \) on 
	\( \product{\family{X} }{\family{Y} } \)
	induces a homomorphism
	\( \phi : \Jacfamily{\family{X} } \to \Jacfamily{\family{Y} } \).
	The $12\!\times\! 12$ matrix
	\( \differentialmatrix[X ,Y ]{\phi } \)
	is lower-triangular,
	with diagonal entries
	\[
		e_1,\ \ e_2,\ \ e_1,\ \ e_1^\sigma,\ \  e_2,\ \  e_2,\ \  e_2^\sigma,\ \  e_2^\sigma,\ \  e_1,\ \  e_1^\sigma,\ \  e_2^\sigma,\ \  e_1^\sigma 
	\]
	(with $e_1$ and $e_2$ defined as in Example~\ref{example:linear-13}),
	each of which is an element of norm $3$ in $\QQ(\beta_{13})$.
	We therefore have
	\[
		\differentialmatrix[X ,Y ]{\phi }
		\differentialmatrix[Y ,X ]{\dualof{\phi }}
		=
		\differentialmatrix[X ,Y ]{\phi }
		\differentialmatrix[X ,Y ]{\phi }^\sigma
		= 
		3I_{12},
	\]
	so 
	\(
		\dualof{\phi }\phi  
		= 
		\multiplication[\family{X} ]{3}
	\);	
	hence $\ker\phi \cong \isogenytype{3}{12}$
	by Lemma~\ref{lemma:kernel-structure}.
	The image of $\Jacfamily{X}$ in $\abelianmoduli{12}$
	is three-dimensional 
	by Lemma~\ref{lemma:family-dimension}
	and Torelli's theorem.
	We conclude that $\phi$
	is a three-dimensional family of $\isogenytype{3}{12}$-isogenies
	of (generically) absolutely simple Jacobians,
	thus proving Theorem~\ref{theorem:main}
	for the eighth row of the table.
\end{example}

\begin{example}
\label{example:linear-15}
	Let $f_{15}$, $A_{15}$, $\alpha_{15}$, and $\sigma$
	be as in Example~\ref{example:degree-15}.
	The linear construction on~$(f_{15},-f_{15}^\sigma)$
	yields 
	a two-parameter family 
	\[
		\left(
		\family{X} : y_1^2 = f_{15}(x_2) + s 
		,\ 
		\family{Y} : y_2^2 = -f_{15}^\sigma(x_2) + s 
		\right)
	\]
	of pairs of hyperelliptic curves of genus~$7$
	defined over $\QQ(\alpha_{15})$.
	Specializing $\family{X}$ at $(s,t) = (0,1)$ 
	and reducing modulo a prime of $\QQ(\alpha_{15})$ over $17$,
	we obtain a curve $\overline{X}$ over $\FF_{17}$.
	The Weil polynomial $\chi$ of $\Jac{\overline{X}}$
	is irreducible,
	and
	corresponds to the Weil coefficients 
	\[
		w_1 = 0,\  
		w_2 = -4,\ 
		w_3 = -30,\ 
		w_4 = 158,\ 
		w_5 = 972,\ 
		w_6 = -2264,\ 
		w_7 = -18434
		.
	\]
	Applying
	Lemma~\ref{lemma:simplicity-criterion},
	we see that $\Jac{\overline{X}}$
	is absolutely simple.
	Hence, the generic fibre of $\Jacfamily{X}$
	is absolutely simple.

	The correspondence
	\( \variety{A_{15}(x_1,x_2),y_1-y_2} \)
	on
	\( \product{\family{X}}{\family{Y}} \)
	induces a homomorphism
	\( \phi: \Jacfamily{\family{X}}\to\Jacfamily{\family{Y}} \).
	The $7\!\times\! 7$ matrix
	\( \differentialmatrix[X,Y]{\phi} \)
	is lower-triangular
	with diagonal entries 
	\[
		\alpha_{15}^\sigma,\ \alpha_{15}^\sigma,\ -2,\ \alpha_{15}^\sigma,\ 2,\ 2,\ -\alpha_{15},
	\]
	each of which is an element of norm $4$
	(we omit the other entries for lack of space).
	We therefore find 
	\[
		\differentialmatrix[X,Y]{\phi}
		\differentialmatrix[Y,X]{\dualof{\phi}}
		=
		\differentialmatrix[X,Y]{\phi}
		\differentialmatrix[X,Y]{\phi}^\sigma
		=
		4I_{7}
		,
	\]
	so
	$\compose{\dualof{\phi}}{\phi} = \multiplication[\Jacfamily{\family{X}}]{4}$.
	Specializing at $(s,t) = (1,0)$
	and reducing
	modulo a prime 
	over~$31$,
	we obtain curves 
	$\overline{X}: \bar{y}_1^2 = \bar{x}_1^{15} + 1$ 
	and 
	$\overline{Y}: \bar{y}_2^2 = -\bar{x}_2^{15} + 1$,
	together with an isogeny
	$\overline\phi : \Jac{\overline{X}} \to \Jac{\overline{Y}}$ 
	induced by 
	$
		\variety{\overline{A}(\bar x_1,\bar x_2),\bar y_1-\bar{y}_2}
		\subset
		\product[]{\overline{X}}{\overline{Y}}
	$,
	where
	\[
		\overline{A}
		= \bar{x}_1^7 + 14\bar{x}_1^6\bar{x}_2 - 2\bar{x}_1^5\bar{x}_2^2
		+ 19\bar{x}_1^4\bar{x}_2^3 + 15\bar{x}_1^3\bar{x}_2^4 
		- 2\bar{x}_1^2\bar{x}_2^5 + 18\bar{x}_1\bar{x}_2^6 + \bar{x}_2^7
		.
	\]
	The polynomials $x_1^{15} + 1$ and $-x_2^{15} + 1$
	both split completely over $\FF_{31}$.
	Applying Lemmas~\ref{lemma:two-rank}
	and~\ref{lemma:kernel-from-two-rank},
	we see that 
	$\ker{\overline\phi} \cong \isogenytypetwo{4}{4}{2}{6}$.
	The image of $\Jacfamily{X}$ in~$\abelianmoduli{7}$
	is two-dimensional by Lemma~\ref{lemma:family-dimension}
	and Torelli's theorem.
	We conclude that
	$\phi$ is 
	a two-dimensional family of $\isogenytypetwo{4}{4}{2}{6}$-isogenies
	of (generically) absolutely simple Jacobians,
	thus proving Theorem~\ref{theorem:main}
	for the fifth row of the table.
\end{example}


\begin{example}
\label{example:quadratic-15}
	Let $f_{15}$, $A_{15}$, $\alpha_{15}$, and $\sigma$
	be as in Examples~\ref{example:degree-15}
	and~\ref{example:linear-15}.
	The quadratic construction on 
	$(f_{15},-f_{15}^\sigma)$
	yields a three-parameter family
	\[
		\left(
		\family{X}  : 
			y_1^2 = f_{15}(x_2)^2 + s_1f_{15}(x_2) + s_2
		,\ 
		\family{Y}  : 
			y_2^2 = f_{15}^\sigma(x_2)^2 - s_1f_{15}^\sigma(x_2) + s_2 
		\right)
	\]
	of pairs of hyperelliptic curves of genus~$14$
	defined over $\QQ(\alpha_{15})$.
	Specializing at $(s_1,s_2,t) = (1,1,1)$ 
	and reducing modulo a prime of $\QQ(\alpha_{15}$ over $17$,
	we obtain a curve $\overline{X}$ over $\FF_{17}$.
	The Weil polynomial $\chi$ of $\Jac{\overline{X}}$ 
	is irreducible, and
	corresponds to the Weil coefficients
	listed in Table~\ref{table:Weil-coefficients-quadratic-15}.
	\begin{table}
	\caption{Weil polynomial coefficients for Example~\ref{example:quadratic-15}}
	\label{table:Weil-coefficients-quadratic-15}
	\begin{center}
	\begin{tabular}{|r|l|r|l|r|l|r|l|r|l|}
	\hline
	$i$ & $w_i$ & 
	$i$ & $w_i$ &
	$i$ & $w_i$ &
	$i$ & $w_i$ &
	$i$ & $w_i$ \\
	\hline
	\hline
	$1$ & $ -4 $ &
	$4$ & $ -73 $ &
	$7$ & $ 5874 $ &
	$10$ & $ 1252762 $ &
	$13$ & $ 80232390 $ 
	\\
	$2$ & $ 15 $ &
	$5$ & $ 1000 $ &
	$8$ & $ 29004 $ &
	$11$ & $ -1381092 $ &
	$14$ & $ -230738522 $ 
	\\
	$3$ & $ -6 $ &
	$6$ & $ -1182 $ &
	$9$ & $ 22810 $ &
	$12$ & $ 8168424 $ &
	& \\
	\hline
	\end{tabular}
	\end{center}
	\end{table}
	Applying
	Lemma~\ref{lemma:simplicity-criterion},
	we see that $\Jac{\overline{X}}$
	is absolutely simple.
	Hence, the generic fibre of $\Jacfamily{X}$
	is absolutely simple.

	The correspondence
	\( \variety{A_{31}(x_1,x_2),y_1-y_2} \)
	on
	\( \product{\family{X} }{\family{Y} } \)
	induces a homomorphism
	$\phi: \Jacfamily{\family{X} }\to\Jacfamily{\family{Y} }$.
	The $14\!\times\! 14$ matrix
	\( \differentialmatrix[X ,Y ]{\phi } \)
	is lower-triangular
	with diagonal entries 
	\[
		\alpha_{15}^\sigma,\ \alpha_{15}^\sigma,\ -2,\ \alpha_{15}^\sigma,\ 2,\ 2,\ -\alpha_{15},\ \alpha_{15}^\sigma,\ -2,\ -2,\ -\alpha_{15},\ 2,\ -\alpha_{15},\ \alpha_{15},
	\]
	each of which is an element of norm $4$
	(we omit the other entries for lack of space.)
	We therefore find 
	\[
		\differentialmatrix[X ,Y ]{\phi }
		\differentialmatrix[Y ,X ]{\dualof{\phi }}
		=
		\differentialmatrix[X ,Y ]{\phi }
		\differentialmatrix[X ,Y ]{\phi }^\sigma
		=
		4I_{14}
		,
	\]
	so
	$\compose{\dualof{\phi }}{\phi } = \multiplication[\Jacfamily{\family{X} }]{4}$.
	Specializing at $(s_1,s_2,t) = (0,-1,0)$
	and reducing modulo a prime over $31$,
	we obtain curves
	$\overline{X}: \bar{y}_1^2 = \bar{x}_1^{30} - 1$ 
	and 
	$\overline{Y}: \bar{y}_2^2 = \bar{x}_2^{30} - 1$,
	together with an isogeny
	$\overline\phi : \Jac{\overline{X}} \to \Jac{\overline{Y}}$ 
	induced by 
	$
		\variety{\overline{A}(\bar x_1,\bar x_2),\bar y_1-\bar{y}_2}
		\subset
		\product[]{\overline{X}}{\overline{Y}}
	$,
	where
	\[
		\overline{A}
		= \bar{x}_1^7 + 14\bar{x}_1^6\bar{x}_2 - 2\bar{x}_1^5\bar{x}_2^2
		+ 19\bar{x}_1^4\bar{x}_2^3 + 15\bar{x}_1^3\bar{x}_2^4 
		- 2\bar{x}_1^2\bar{x}_2^5 + 18\bar{x}_1\bar{x}_2^6 + \bar{x}_2^7
		.
	\]
	The polynomial $x_i^{30} - 1$
	splits completely over $\FF_{31}$.
	Applying Lemmas~\ref{lemma:two-rank}
	and~\ref{lemma:kernel-from-two-rank},
	we see that 
	$\ker{\overline\phi} \cong \isogenytypetwo{4}{9}{2}{10}$.
	The image of $\Jacfamily{X}$ in $\abelianmoduli{14}$
	is three-dimensional by Lemma~\ref{lemma:family-dimension}
	and Torelli's theorem.
	We conclude that $\phi$ is a family of 
	$\isogenytypetwo{4}{9}{2}{10}$-isogenies
	of (generically) absolutely simple Jacobians,
	thus proving Theorem~\ref{theorem:main}
	for the ninth row of the table.
\end{example}

\begin{example}
\label{example:linear-21}
	Let $f_{21}$, $A_{21}$, $\alpha_{21}$, and $\sigma$
	be as in Example~\ref{example:degree-21}.
	The linear construction on
	$(f_{21},f_{21}^\sigma)$
	yields
	a one-parameter family
	\[
		\left(
		\family{X}  : y_1^2 = f_{21}(x_1) + s 
		,\ 
		\family{Y}  : y_2^2 = f_{21}^\sigma(x_2) + s 
		\right)
	\]
	of pairs of hyperelliptic curves of genus $10$
	over $\QQ(\alpha_{21})$.
	Specializing $\family{X}$ at $s = 0$ 
	and reducing modulo a prime of $\QQ(\alpha_{21})$ over $5$,
	we obtain a curve $\overline{X}$ over $\FF_{25}$.
	The Weil polynomial $\chi$ of $\Jac{\overline{X}}$
	is irreducible,
	and
	corresponds to the Weil coefficients
	listed 
	in Table~\ref{table:Weil-coefficients-linear-21}.
	\begin{table}
	\caption{Weil polynomial coefficients for Example~\ref{example:linear-21}}
	\label{table:Weil-coefficients-linear-21}
	\begin{center}
	\begin{tabular}{|r||l|l|l|l|l|l|l|l|l|l|}
	\hline
	$i$ &
	$1$ & $2$ & $3$ & $4$ & $5$ & $6$ & $7$ & $8$ & $9$ & $10$ \\
	\hline
	$w_i$ & 
	$4$ & $36$ & $272$ & $1268$ & $6492$ & $28540$ & $142200$ & $453284$ & $1065612$ & $17399206$ \\
	\hline
	\end{tabular}
	\end{center}
	\end{table}
	Applying
	Lemma~\ref{lemma:simplicity-criterion},
	we see that $\Jac{\overline{X}}$
	is absolutely simple.
	Hence, the generic fibre of $\Jacfamily{X}$
	is absolutely simple.

	The correspondence
	\( \variety{A_{21}(x_1,x_2),y_1-y_2} \)
	on
	\( \product{\family{X} }{\family{Y} } \)
	induces a homomorphism
	\( \phi  : \Jacfamily{\family{X} } \to \Jacfamily{\family{Y} } \).
	The $10\!\times\! 10$ matrix
        \( \differentialmatrix[X ,Y ]{\phi } \)
	is lower-triangular;
	if we set $e = (\alpha_{21}^\sigma)^2$,
	then the diagonal entries of
        \( \differentialmatrix[X ,Y ]{\phi } \)
	are
	\[
		(\alpha_{21}^\sigma)^2,\ 
		(\alpha_{21}^\sigma)^2,\ 
		-(\alpha_{21}^\sigma)^2,\ 
		(\alpha_{21}^\sigma)^2,\ 
		\alpha_{21}^2,\ 
		-(\alpha_{21}^\sigma)^2,\ 
		\alpha_{21}\alpha_{21}^\sigma,\ 
		(\alpha_{21}^\sigma)^2,\ 
		-\alpha_{21}^2,\ 
		\alpha_{21}^2,
	\]
	each of which is an element of norm $4$
	(we omit the other entries for lack of space).
	We therefore have
	\[       
               	\differentialmatrix[X ,Y ]{\phi }
		\differentialmatrix[Y ,X ]{\dualof{\phi }}
		=
               	\differentialmatrix[X ,Y ]{\phi }
                \differentialmatrix[X ,Y ]{\phi }^\sigma
               	=
		4I_{10}
		,
	\]
	so
	$\compose{\dualof{\phi }}{\phi } = \multiplication[\Jacfamily{\family{X} }]{4}$.
	Specializing at $s = 425$
	and reducing modulo a prime over $599$,
	we obtain curves
	$\overline{X}$
	and
	$\overline{Y}$
	and an isogeny 
	\( \overline{\phi}: \Jac{\overline{X}} \to \Jac{\overline{Y}} \)
	over $\FF_{599}$.
	Applying Lemmas~\ref{lemma:two-rank} and~\ref{lemma:kernel-from-two-rank},
	we find $\ker\overline{\phi}|_{\Jac{\overline{X}}[2]} \cong \isogenytype{2}{11}$,
	so 
	$\ker\phi\cong\isogenytypetwo{4}{9}{2}{2}$.
	The image of $\Jacfamily{X}$ in $\abelianmoduli{10}$
	is one-dimensional by Lemma~\ref{lemma:family-dimension}
	and Torelli's theorem.
	We conclude that
	$\phi $ is 
	a one-dimensional family of 
	$\isogenytypetwo{4}{9}{2}{2}$-isogenies
	of (generically) absolutely simple Jacobians,
	thus proving Theorem~\ref{theorem:main}
	for the seventh row of the table.
\end{example}

\begin{example}
\label{example:quadratic-21}
	Let $f_{21}$, $A_{21}$, $\alpha_{21}$, and $\sigma$
	be as in Examples~\ref{example:degree-21}
	and~\ref{example:linear-21}.
	The quadratic construction on 
	$(f_{21},f_{21}^\sigma)$
	yields
	a two-parameter family
	\[
		\left( 
		\family{X}  : 
		y_1^2 = f_{21}(x_1)^2 + s_1f_{21}(x_1) + s_2 
		,\ 
		\family{Y}  : 
		y_2^2 = f_{21}^\sigma(x_2)^2 + s_1f_{21}^\sigma(x_2) + s_2 
		\right) 
	\]
	of pairs of hyperelliptic curves of genus $10$
	defined over $\QQ(\alpha_{21})$.
	Specializing $\family{X}$ at $(s_1,s_2) = (1,1)$ 
	and reducing modulo a prime of $\QQ(\alpha_{21})$ over $11$,
	we obtain a curve $\overline{X}$ over $\FF_{11}$.
	The Weil polynomial $\chi$ of $\Jac{\overline{X}}$
	is irreducible,
	and
	corresponds to the Weil coefficients listed
	in Table~\ref{table:Weil-coefficients-quadratic-21}.
	\begin{table}
	\caption{Weil polynomial coefficients for Example~\ref{example:quadratic-21}}
	\label{table:Weil-coefficients-quadratic-21}
	\begin{center}
	\begin{tabular}{|r|l|r|l|r|l|r|l|r|l|}
	\hline
	$i$  & $w_i$ &
	$i$  & $w_i$ &
	$i$  & $w_i$ &
	$i$  & $w_i$ &
	$i$  & $w_i$ 
	\\
	\hline
	\hline
	$1$  & $ -4 $ &
	$5$  & $ -1616 $ &
	$9$  & $ -431556 $ &
	$13$ & $ -83783104 $ &
	$17$ & $ -12690445996 $ 
	\\
	$2$  & $ 13 $ &
	$6$  & $ 5919 $ &
	$10$ & $ 1564993 $ &
	$14$ & $ 294134355 $ &
	$18$ & $ 43906230241 $ 
	\\
	$3$  & $ -74 $ &
	$7$  & $ -24382 $ &
	$11$ & $ -5699656 $ &
	$15$ & $ -1000833886 $ &
	$19$ & $ -144999550062 $ 
	\\
	$4$  & $ 403 $ &
	$8$  & $ 105299 $ &
	$12$ & $ 22091457 $ &
	$16$ & $ 3592033583 $ &
	$20$ & $ 476625334323 $ 
	\\
	\hline
	\end{tabular}
	\end{center}
	\end{table}
	Applying Lemma~\ref{lemma:simplicity-criterion},
	we see that $\Jac{\overline{X}}$
	is absolutely simple.
	Hence, the generic fibre of $\Jacfamily{X}$
	is absolutely simple.

	The correspondence
	\( C = \variety{A_{21}(x_1,x_2),y_1-y_2} \)
	on
	\( \product{\family{X}_{20}}{\family{Y}_{20}} \)
	induces a homomorphism
	\( \phi : \Jacfamily{\family{X} } \to \Jacfamily{\family{Y} } \).
	The $20\!\times\! 20$ matrix
        \( \differentialmatrix[X ,Y ]{\phi } \)
	is a lower-triangular;
	if we set $e := -(\alpha_{21} + 1)$,
	then the diagonal entries
	of 
        \( \differentialmatrix[X ,Y ]{\phi } \)
	are
	\[
		\begin{array}{l}
		(\alpha_{21}^\sigma)^2,\ 
		(\alpha_{21}^\sigma)^2,\ 
		-(\alpha_{21}^\sigma)^2,\ 
		(\alpha_{21}^\sigma)^2,\ 
		\alpha_{21}^2,\ 
		-(\alpha_{21}^\sigma)^2,\ 
		\alpha_{21}\alpha_{21}^\sigma,\ 
		(\alpha_{21}^\sigma)^2,\ 
		-\alpha_{21}^2,\ 
		\alpha_{21}^2, \\
		(\alpha_{21}^\sigma)^2,
		-(\alpha_{21}^\sigma)^2,\ 
		\alpha_{21}^2,\ 
		\alpha_{21}\alpha_{21}^\sigma,\ 
		-\alpha_{21}^2,\ 
		(\alpha_{21}^\sigma)^2,\ 
		\alpha_{21}^2,\ 
		-\alpha_{21}^2,\ 
		\alpha_{21}^2,\ 
		\alpha_{21}^2,
		\end{array}
	\]
	each of which is an element of norm $4$
	(we omit the other entries for lack of space).
	We therefore find
	\[       
               	\differentialmatrix[X ,Y ]{\phi }
		\differentialmatrix[Y ,X ]{\dualof{\phi }}
		=
               	\differentialmatrix[X ,Y ]{\phi }
                \differentialmatrix[X ,Y ]{\phi }^\sigma
               	=
		4I_{20}
		,
	\]
	so
	$\compose{\dualof{\phi }}{\phi } = \multiplication[\Jacfamily{\family{X} }]{4}$.
	Specializing at $(s_1,s_2) = (1,6)$
	and reducing at a prime over~$29$,
	we obtain curves
	$\overline{X}$
	and $\overline{Y}$
	and an isogeny
	$\overline{\phi}:\Jac{\overline{X}}\to\Jac{\overline{Y}}$
	over $\FF_{29}$.
	Applying Lemmas~\ref{lemma:two-rank} and~\ref{lemma:kernel-from-two-rank},
	we see that
	$\ker\overline{\phi} \cong \isogenytypetwo{4}{19}{2}{2}$.
	The image of $\Jacfamily{X}$ in~$\abelianmoduli{20}$
	is two-dimensional by Lemma~\ref{lemma:family-dimension}
	and Torelli's theorem.
	We conclude that 
	$\phi$ is 
	a two-dimensional family of $\isogenytypetwo{4}{19}{2}{2}$-isogenies
	of (generically) absolutely simple Jacobians,
	thus proving Theorem~\ref{theorem:main}
	for the eleventh row of the table.
\end{example}

\begin{example}
\label{example:linear-31}
	Let $f_{31}$, $A_{31}$, $\alpha_{31}$, $\beta_{31}$, and $\sigma$
	be as in Example~\ref{example:degree-31}.
	The linear construction on 
	$(f_{31},f_{31}^\sigma)$
	yields 
	a one-parameter family 
	\[
		\left(\family{X}  : y_1^2 = f_{31}(x_1) + s
		,\ 
		\family{Y}  : y_2^2 = f_{31}^\sigma(x_2) + s
		\right)
	\]
	of pairs of hyperelliptic curves of genus~$15$
	over $\QQ(\alpha_{31})$.
	Specializing $\family{X}$ at $s = 0$ 
	and reducing modulo a prime of $\QQ(\alpha_{31})$ over $5$,
	we obtain a curve $\overline{X}$ over $\FF_{5^3}$.
	The Weil polynomial of $\Jac{\overline{X}}$ 
	is irreducible,
	and
	corresponds to the Weil coefficients
	listed in Table~\ref{table:Weil-polynomial-linear-31}.
	\begin{table}
	\caption{Weil polynomial coefficients for Example~\ref{example:linear-31}}
	\label{table:Weil-polynomial-linear-31}
	\begin{center}
	\begin{tabular}{|r|l|r|l|r|l|r|l|}
	\hline
	$i$ & $w_i$ &
	$i$ & $w_i$ &
	$i$ & $w_i$ &
	$i$ & $w_i$ \\
	\hline
	\hline
	$1$ & $25$ &
	$5$ & $146470$ &
	$9$ & $-5019303477$ &
	$13$ & $17625044970092$ \\
	$2$ & $447$ &
	$6$ & $-1950824$ &
	$10$ & $9095279162$ &
	$14$ & $-265293278436450$ \\
	$3$ & $5046$ &
	$7$ & $-61460901$ &
	$11$ & $544453054742$ &
	$15$ & $-4448335615035972$ \\
	$4$ & $42930$ &
	$8$ & $-750851497$ &
	$12$ & $5818130546490$ &
	& \\
	\hline
	\end{tabular}
	\end{center}
	\end{table}
	The Jacobian $\Jac{\overline{X}}$ is absolutely simple
	by Lemma~\ref{lemma:simplicity-criterion};
	hence the generic fibre of $\Jacfamily{X}$
	is absolutely simple.

	The correspondence
	\( C = \variety{A (x_1,x_2),y_1-y_2} \)
	on
	\( \product{\family{X} }{\family{Y} }\ \)
	induces a homomorphism 
	\( \phi : \Jacfamily{\family{X} } \to \Jacfamily{\family{Y} } \).
	The $15\!\times\! 15$ matrix \( \differentialmatrix[X ,Y ]{\phi } \)
	is lower-triangular;
	if we set
	\( e_1 := -((\beta_{31}^2 - 9\beta_{31} + 14)\alpha_{31} + 4\beta_{31} - 16)/4 \),
	\( e_2 := -((\beta_{31} - 6)\alpha_{31} + \beta_{31}^2 - 8\beta_{31} + 8)/2 \),
	and 
	\( e_3 := \alpha_{31} - \beta_{31} + 4 \),
	then the diagonal entries of 
	\( \differentialmatrix[X ,Y ]{\phi } \)
	are
	\[
		e_1,\  
		e_1,\  
		e_2,\  
		e_1,\  
		e_3,\  
		e_2,\  
		e_2^\sigma,\  
		e_1,\  
		e_3,\  
		e_3,\  
		e_3^\sigma,\  
		e_2,\  
		e_3^\sigma,\  
		e_2^\sigma,\  
		e_1^\sigma,
	\]
	each of which is an element of norm $8$ in $\QQ(\beta_{31})$
	(we omit the other entries for lack of space).
	We therefore find
	\[
		\differentialmatrix[X ,Y ]{\phi }
		\differentialmatrix[Y ,X ]{\dualof{\phi }} 
		= 
		\differentialmatrix[X ,Y ]{\phi }
		\differentialmatrix[X ,Y ]{\phi }^\sigma
		=
		8I_{15}
		,
	\]
	so
	$\compose{\dualof{\phi }}{\phi } = \multiplication[\Jacfamily{\family{X} }]{8}$.
	Specializing at $s = 0$
	and reducing modulo a prime over~$47$,
	we obtain curves 
	$\overline{X}$
	and
	$\overline{Y}$
	and an isogeny $\overline{\phi}: \Jac{\overline{X}}\to\Jac{\overline{Y}}$
	over $\FF_{47}$.
	Applying Lemmas~\ref{lemma:two-rank} 
	and~\ref{lemma:kernel-from-two-rank},
	we find $\ker\overline{\phi} \cong \isogenytypethree{8}{5}{4}{10}{2}{10}$.
	The image of $\Jacfamily{X}$ in $\abelianmoduli{15}$
	is one-dimensional by Lemma~\ref{lemma:family-dimension}
	and Torelli's theorem.
	We conclude that $\phi$
	is a one-dimensional family of 
	$\isogenytypethree{8}{5}{4}{10}{2}{10}$-isogenies
	of (generically) absolutely simple Jacobians,
	thus proving Theorem~\ref{theorem:main}
	for the tenth row of the table.
\end{example}

\begin{example}
\label{example:quadratic-31}
	Let $f_{31}$, $A_{31}$, $\alpha_{31}$, $\beta_{31}$, and $\sigma$
	be as in Examples~\ref{example:degree-31}
	and~\ref{example:linear-31}.
	The quadratic construction on 
	$(f_{31},f_{31}^\sigma)$
	yields a 
	two-parameter family 
	\[
		\left( 
		\family{X}  : 
		y_1^2 = f_{31}(x_1)^2 + s_1f_{31}(x_1) + s_2
		,\ 
		\family{Y}  : 
		y_2^2 = f_{31}^\sigma(x_1)^2 + s_1f_{31}^\sigma(x_2) + s_2
		\right)
	\]
	of pairs of hyperelliptic curves of genus~$30$
	defined over $\QQ(\alpha_{31})$.
	Specializing $\family{X}$ at $(s_1,s_2) = (1,2)$ 
	and reducing modulo a prime of $\QQ(\alpha_{31})$ over $3$,
	we obtain a curve~$\overline{X}$ over $\FF_{3^6}$.
	The Weil polynomial of $\Jac{\overline{X}}$ 
	is irreducible,
	and
	corresponds to the Weil coefficients
	listed in Table~\ref{table:Weil-coefficients-quadratic-31}.
	\begin{table}
	\caption{Weil polynomial coefficients for Example~\ref{example:quadratic-31}}
	\label{table:Weil-coefficients-quadratic-31}
	\begin{center}
	\begin{tabular}{|r|l|r|l|}
	\hline
	$i$  & $w_i$ &
	$i$  & $w_i$ \\
	\hline
	\hline
	$1$  & $86$ &
	$14$ & $2538874803438283085247$ \\
	$2$  & $3451$ &
	$15$ & $75551657032201511555544$ \\
	$3$  & $87828$ &
	$16$ & $2132291122470015060842077$ \\
	$4$  & $1643613$ &
	$17$ & $58726738607409603792625818$ \\
	$5$  & $43045482$ &
	$18$ & $1634122583940469502202897151$ \\
	$6$  & $1781887735$ &
	$19$ & $48450321094461320825161410124$ \\
	$7$  & $76936315232$ &
	$20$ & $1504867060985705824450391696293$ \\
	$8$  & $3105710470069$ &
	$21$ & $45345655631250765718117003095430$ \\
	$9$  & $102095895729754$ &
	$22$ & $1270533776275133738442812562176203$ \\
	$10$ & $2779643454835731$ &
	$23$ & $34526697723237826449755783511899672$ \\
	$11$ & $71233879362094240$ &
	$24$ & $956449237011673888073922827627521777$ \\
	$12$ & $2193677250388156081$ &
	$25$ & $26767220948731629452685495358053131182$ \\
	$13$ & $77619720346267760370$ & 
	$26$ & $757441695740127512275452904130818491239$ \\ 
	\hline
	$27$ & \multicolumn{3}{|l|}{$21123226183916202851140834209673472022292$} \\
	$28$ & \multicolumn{3}{|l|}{$575803060349811307421020344590821665754597$} \\
	$29$ & \multicolumn{3}{|l|}{$15365239367923178818677513358710798508553810$} \\
	$30$ & \multicolumn{3}{|l|}{$408015365744689122150660893862413952306834751$} \\
	\hline
	\end{tabular}
	\end{center}
	\end{table}
	The Jacobian $\Jac{\overline{X}}$ is absolutely simple
	by Lemma~\ref{lemma:simplicity-criterion};
	hence the generic fibre of $\Jacfamily{X}$
	is absolutely simple.

	The correspondence
	\( C = \variety{A_{31}(x_1,x_2),y_1-y_2} \)
	on
	\( \product{\family{X} }{\family{Y} } \)
	induces a homomorphism
	\( \phi : \Jacfamily{\family{X} } \to \Jacfamily{\family{Y} } \).
	The $30\!\times\! 30$ matrix \( \differentialmatrix[X ,Y ]{\phi } \);
	is lower-triangular
	with diagonal entries 
	\[
		\begin{array}{l}
		e_1,\  e_1,\  e_2,\  e_1,\  e_3,\  e_2,\  e_2^\sigma,\  e_1,\  e_3,\  e_3,\  
		e_3^\sigma,\  e_2,\  e_3^\sigma,\  e_2^\sigma,\  e_1^\sigma,\ 
		\\
		e_1,\  e_2,\  e_3,\  e_2^\sigma,\  e_3,\  e_3^\sigma,\  e_3^\sigma,\  
		e_1^\sigma,\  e_2,\  e_2^\sigma,\  e_3^\sigma,\  e_1^\sigma,\  
		e_2^\sigma,\  e_1^\sigma,\  e_1^\sigma
		\end{array}
	\]
	(where $e_1$, $e_2$, and $e_3$ are defined as in Example~\ref{example:linear-31}),
	each of which is an element of norm $8$ in $\QQ(\beta_{31})$
	(we omit the other entries for lack of space).
	We therefore have
	\[
		\differentialmatrix[X ,Y ]{\phi }
		\differentialmatrix[Y ,X ]{\dualof{\phi }} 
		= 
		\differentialmatrix[X ,Y ]{\phi }
		\differentialmatrix[X ,Y ]{\phi }^\sigma
		=
		8I_{30}
		,
	\]
	so
	$\compose{\dualof{\phi }}{\phi } = \multiplication[\Jacfamily{\family{X} }]{8}$.
	Specializing at $(s_1,s_2) = (4,9)$
	and reducing modulo a prime over $47$,
	we obtain curves $\overline{X}$ and $\overline{Y}$
	and an isogeny $\overline{\phi}:\Jac{\overline{X}} \to \Jac{\overline{Y}}$
	over $\FF_{47}$.
	Applying Lemmas~\ref{lemma:two-rank} and~\ref{lemma:kernel-from-two-rank},
	we find
	$\ker\overline{\phi} \cong \isogenytypethree{8}{11}{4}{19}{2}{19}$.
	The image of $\Jacfamily{X}$ in $\abelianmoduli{30}$
	is two-dimensional by Lemma~\ref{lemma:family-dimension}
	and Torelli's theorem.
	We conclude that $\phi$
	is a two-dimensional family of 
	$\isogenytypethree{8}{11}{4}{19}{2}{19}$-isogenies
	of (generically) absolutely simple Jacobians,
	thus proving Theorem~\ref{theorem:main}
	for the twelfth row of the table.
\end{example}

\end{document}